\newtheorem{theorem}{Theorem}[section]
\newtheorem{lemma}[theorem]{Lemma}
\newtheorem{corollary}[theorem]{Corollary}
\theoremstyle{definition}
\newtheorem{definition}[theorem]{Definition}
\theoremstyle{remark}
\newcommand{\red}[1]{#1}
\newcommand{\blue}[1]{#1}
\begin{document}
\title{The Computational Complexity of Duality}
\author[S.~Friedland]{Shmuel~Friedland}
\address{Department of Mathematics, Statistics and Computer Science,  University of Illinois, Chicago}
\email{friedlan@uic.edu}
\author[L.-H.~Lim]{Lek-Heng~Lim}
\address{Computational and Applied Mathematics Initiative, Department of Statistics,
University of Chicago}
\email{lekheng@galton.uchicago.edu}
\begin{abstract}
We show that for any given norm ball or proper cone, weak membership in its dual ball or dual cone  is polynomial-time reducible to  weak membership in the given ball or cone. A consequence is that the weak membership or membership problem for a ball or cone is NP-hard if and only if the corresponding problem for the dual ball or cone is NP-hard. In a similar vein, we show that computation of the dual norm of a given norm is polynomial-time reducible to computation of the given norm. This extends to convex functions satisfying a polynomial growth condition: for such a given function, computation of its Fenchel dual/conjugate is polynomial-time reducible to computation of the given function. Hence the computation of a norm or a convex function of polynomial-growth is NP-hard if and only if the computation of its dual norm or Fenchel dual is NP-hard. We discuss implications of these results on the weak membership problem for a symmetric convex body and its polar dual, the polynomial approximability of Mahler volume, and the weak membership problem for the epigraph of a convex function with polynomial growth and that of its Fenchel dual.
\end{abstract}

\keywords{dual norm, dual cone, Fenchel dual, NP-hard, weak membership, approximation}

\subjclass[2010]{15B48, 52A41, 65F35, 90C46, 90C60}

\maketitle

\section{Introduction}

In convex optimization, we often encounter problems that involve one of the following notions of duality. For convex sets: (i) norm balls and their polar duals, (ii) proper cones and their dual cones; for convex functions:
(iii) norms and their dual norms; (iv) functions and their Fenchel duals. The main goal of this article is to establish the equivalence between the polynomial-time computability or NP-hardness of these objects and their duals.

We will first show in Section~\ref{sec:norm}  that the weak membership problem for a norm ball is NP-hard (resp.\ is polynomial-time) if and only if the weak membership problem for its dual norm ball is NP-hard (resp.\ is polynomial-time). For readers unfamiliar with the notion, NP-hardness of \emph{weak} membership is a \emph{stronger} statement than NP-hardness of membership, i.e., the latter is implied by the former. Since every symmetric convex compact set with nonempty interior is a norm ball, the result applies to such objects and their polar duals as well.

In Section~\ref{sec:approx} we show that the approximation of a norm to arbitrary precision is NP-hard (resp.\ is polynomial-time) if and only if  weak membership  in the unit ball of the norm is NP-hard (resp.\ is \blue{polynomial-time}).  A consequence is that if the weak membership problem for a norm ball is polynomial-time decidable, then its Mahler volume is polynomial-time approximable. In fact, computation of Mahler volume is polynomial-time reducible to the weak membership problem for a norm ball.

In Section~\ref{sec:cones}, we establish an analogue of our norm ball result for proper cones, showing that  the weak membership problem for such a cone  is NP-hard (resp.\ is polynomial-time) if and only if the weak membership problem for its dual cone can be decided is NP-hard (resp.\ is polynomial-time).

We conclude by showing in Section~\ref{sec:fenchel} that for convex functions that satisfy a polynomial-growth condition, its Fenchel dual must also satisfy the same condition with possibly different constants. A consequence of this is that such a function is polynomial-time approximable to arbitrary precision if and only if its Fenchel dual is also  polynomial-time approximable to arbitrary precision. On the other hand, such a function is NP-hard to approximate if and only if its Fenchel dual is NP-hard to approximate.

\section{Weak membership, weak validity, and polynomial-time reducibility}\label{sec:weak}

We introduce some basic terminologies based on \cite[Chapter~2]{GLS88}) with some natural extensions for our context. 
Let $B(x,\delta)$ denote the closed Euclidean norm ball of radius $\delta > 0$ centered at $x$ in $\mathbb{R}^n$.
For any $\delta>0$ and any $K \subseteq \mathbb{R}^n$,  we define respectively a `thickened' $K$ and a `shrunkened' $K$ by
\begin{equation}\label{eq:thick-shrunk}
S(K,\delta)\coloneqq \bigcup\nolimits_{x\in K} B(x,\delta)\quad \text{and}\quad S(K,-\delta)\coloneqq \{x\in K: B(x,\delta)\subseteq K\}.
\end{equation}
Note that if $K$ has no interior point, then $S(K,-\delta) = \varnothing$.

\begin{definition}\label{def:mem}
Let $K \subseteq \mathbb{R}^n$ be a convex set with nonempty interior.
\begin{enumerate}[\upshape (i)]
\item The \emph{membership problem} (\textsc{mem}) for $K$ is: Given $x\in\mathbb{Q}^n$, determine if $x$ is in $K$.

\item The \emph{weak membership problem} (\textsc{wmem}) for $K$ is: Given $x\in\mathbb{Q}^n$ and a rational $\delta>0$,  assert that $x\in S(K,\delta)$ or $x\notin S(K,-\delta)$. 

\item\red{The \emph{weak violation problem} (\textsc{wviol}) problem for $K$ is: Given $c\in\mathbb{Q}^n$ and rational $\gamma,\varepsilon>0$,
either assert that $c^\mathsf{T} x\le \gamma +\varepsilon$ for all $x\in S(K,-\varepsilon)$, or find $y\in S(K,\varepsilon)$ with $c^\mathsf{T} y \ge \gamma -\varepsilon$.}

\item The \emph{weak validity problem} (\textsc{wval}) problem for $K$ is: Given $c\in\mathbb{Q}^n$ and rational $\gamma,\varepsilon>0$,
either assert that $c^\mathsf{T} x\le \gamma +\varepsilon$ for all $x\in S(K,-\varepsilon)$, or assert that $c^\mathsf{T} x \ge \gamma -\varepsilon$ for some $x\in S(K,\varepsilon)$.

\item The \emph{weak optimization problem} (\textsc{wopt}) problem for $K$ is: Given $c\in\mathbb{Q}^n$ and a rational $\varepsilon>0$,
either find $y \in\mathbb{Q}^n$ such that  $y\in S(K,\varepsilon)$ and $c^\mathsf{T} x\le c^\mathsf{T} y +\varepsilon$ for all $x\in S(K,-\varepsilon)$, or assert that $S(K,\varepsilon) = \varnothing$.
\end{enumerate}
\end{definition}

For the benefit of readers unfamiliar with these notions, we highlight that in our weak membership problem, there are  $x$'s that satisfy both $x\in S(K,\delta)$ and $x\notin S(K,-\delta)$ simultaneously. So if we can ascertain \textsc{mem}, we can ascertain \textsc{wmem}, but not conversely. A consequence is that if \textsc{wmem} problem for $K$ is NP-hard, then \textsc{mem} for $K$ is also NP-hard.

There will be occasions, particularly in Section~\ref{sec:cones}, when we have to discuss weak membership and weak validity of a convex set $K \subseteq \mathbb{R}^n$ of  positive codimension, i.e., contained in an affine subspace of dimension \blue{less than} $n$. As a subset of $\mathbb{R}^n$, $K$ will have no interior points and the \textsc{wmem} and \textsc{wval} as defined above would make little sense \blue{as $S(K,-\delta)=\varnothing$.}  With this in mind, we introduce the following variant of Definition~\ref{def:mem} that makes use of the interior of $K$ relative to $H$, an affine subspace of minimal dimension that contains $K$, i.e., $H$ is the affine hull of $K$.  We start by defining
\[
S_H(K,-\delta)\coloneqq \{x\in K: B(x,\delta)\cap H\subseteq K\}\quad \blue{\text{and}\quad S_H(K,\delta)\coloneqq S(K,\delta)\cap H}.
\]
Note that if $K \ne \varnothing$, then there exists $\varepsilon >0$ such that $S_H(K,-\delta) \ne \varnothing$ for each $\delta \in (0,\varepsilon)$, even if $K$ has no interior point. If $K$ has nonempty interior, then $H = \mathbb{R}^n$ and $S_H(K,-\delta) =S(K,-\delta)$.

\begin{definition}\label{def:relmem}
Let $K \subseteq \mathbb{R}^n$ be a convex set and let $H = \operatorname{aff}(K)$ be its affine hull.
\begin{enumerate}[\upshape (i)]
\item The weak membership problem (\textsc{wmem}) for $K$ relative to $H$ is: Given $x\in\mathbb{Q}^n$ and a rational number $\delta>0$,  assert that $x\in \blue{S_H(K,\delta)} $ or  $x\notin S_H(K,-\delta)$.  

\item The weak validity problem (\textsc{wval}) problem for $K$ relative to $H$ is: Given $c\in\mathbb{Q}^n$ and rational numbers $\gamma,\varepsilon>0$,
either assert that $c^\mathsf{T} x\le \gamma +\varepsilon$ for all $x\in S_H(K,-\varepsilon)$, or assert that $c^\mathsf{T} x \ge \gamma -\varepsilon$ for some $x\in \blue{S_H(K,\varepsilon)}$.
\end{enumerate}
\end{definition}

An implicit assumption throughout this article is that when we study the computational complexity of \textsc{wmem} and \textsc{wval} problems for a convex set $K \subseteq \mathbb{R}^n$ with nonempty interior, we assume that we know a point $a \in \mathbb{Q}^n$ and a rational $r > 0$ such that the Euclidean norm ball $B(a,r) \subseteq K$. This mild \red{\emph{centering assumption}} guarantees that $K$ is `centered' in the sense of \cite[Definition~2.1.16]{GLS88} and is needed whenever we invoke Yudin--Nemirovski Theorem \red{\cite{YN76}} and \cite[Theorem~4.3.2]{GLS88}.

Recall that a problem $\mathscr{P}$ is said to be \emph{polynomial-time reducible}  \cite[p.~28]{GLS88} to a problem $\mathscr{Q}$ if there is a polynomial-time algorithm $A_\mathscr{P}$ for solving $\mathscr{P}$ by making a polynomial number of oracle calls to an algorithm $A_\mathscr{Q}$  for solving $\mathscr{Q}$. This notion of polynomial-time reducibility is also called Cook  or Turing reducibility and will be the one used throughout our article. There is also a more restrictive notion of polynomial-time reducibility that allows only a single oracle call to  $A_\mathscr{Q}$ called Karp or many-one reducibility.

Note that if $A_\mathscr{Q}$ is a polynomial-time algorithm for $\mathscr{Q}$, then $A_\mathscr{P}$ is a polynomial-time algorithm for $\mathscr{P}$. Consequently, if $\mathscr{Q}$ is computable in polynomial-time, then so is $\mathscr{P}$. On the other hand, if $\mathscr{P}$ is NP-hard, then so is $\mathscr{Q}$.

We say that $\mathscr{P}$ and $\mathscr{Q}$ are \emph{polynomial-time inter-reducible} if $\mathscr{P}$ is polynomial-time reducible to $\mathscr{Q}$ and $\mathscr{Q}$ is polynomial-time reducible to $\mathscr{P}$.  The polynomial-time inter-reducibility of two problems $\mathscr{P}$ and $\mathscr{Q}$ implies that they are in the same time-complexity class\footnote{Assuming that the complexity class is defined by polynomial-time inter-reducibility.} whatever it may be. Nevertheless, in this article we will restrict ourselves to just polynomial-time computability and NP-hardness, the two most \red{often} used cases in optimization.

\section{Weak membership in dual norm balls}\label{sec:norm}

Our \blue{technique} for this section relies on tools introduced in \cite[Chapter 4]{GLS88} and is inspired by \cite[Section~6.1]{Gu02}. While our discussion below is over $\mathbb{R}$, it is easy to extend it to $\mathbb{C}$ since $\mathbb{C}^{n}$ \red{may be} identified with $\mathbb{R}^{2n}\equiv \mathbb{R}^{n}\times \mathbb{R}^{n}$, where $z = x+\sqrt{-1}y\in \mathbb{C}^n$ is identified with $(x,y)\in\mathbb{R}^{n}\times \mathbb{R}^{n}$.  
A norm $\nu:\mathbb{C}^n\to [0,\infty)$ induces a norm $\tilde \nu:\mathbb{R}^{2n}\to [0,\infty)$ via
$\tilde \nu\bigl((x,y)\bigr)\coloneqq \nu(x+\sqrt{-1}y)$ and we may identify $\nu$ with $\tilde \nu$.  In particular, the Hermitian norm on $\mathbb{C}^n$ 
gives exactly the Euclidean norm on $\mathbb{R}^{2n}$.  Hence for the purpose of this article, it suffices to consider norms over real vector spaces.

Let $\nu:\mathbb{R}^n\to [0,\infty)$ be a norm and denote the closed ball and open ball centered at $a \in \mathbb{R}^n$ of radius $r > 0$ with respect to  the norm $\nu$ by
\[
B_\nu(a,r)\coloneqq \{x\in \mathbb{R}^n : \nu(x-a)\le r\}\quad\text{and}\quad
B^\circ_\nu(a,r)\coloneqq \{x\in \mathbb{R}^n : \nu(x-a) < r\}
\]
respectively. For the special case $a = 0$ and $r =1$, we write $B_\nu\coloneqq B_\nu(0,1)$ and  $B^\circ_\nu\coloneqq B^\circ_\nu(0,1)$  for the closed  and open unit balls.  
For the special case $\nu = \|\cdot\|$, the Euclidean norm on $\mathbb{R}^n$, we write $B(a,r)\coloneqq B_{\|\cdot \|}(a,r)$ and  $B^\circ(a,r)\coloneqq B^\circ_{\|\cdot \|}(a,r)$, dropping the subscript.
Since all norms on $\mathbb{R}^n$ are equivalent, it follows that there exist 
constants $K_\nu\ge k_\nu>0$ such that 
\begin{equation}\label{mnuequiv}
k_\nu\|x\|\le \nu(x)\le K_\nu\|x\|\quad \blue{\text{for all}\; x\in \mathbb{R}^n.}
\end{equation}
There is no loss of generality in assuming that $k_\nu$ and $K_\nu$ are rational\footnote{If not just pick a smaller $k_\nu$ or a larger $K_\nu$ that is rational.} and we may denote the number of bits required to specify them by $\langle k_\nu \rangle$ and $ \langle K_\nu \rangle$ respectively.

Recall that the \emph{dual norm} of $\nu$, denoted $\nu^*$, is given by 
\[
\nu^*(x)=\max\{\lvert y^\mathsf{T} x \rvert : \nu(y)\le 1\}
\]
for every $x\in\mathbb{R}^{n}$. Hence
\begin{equation}\label{constdualnrm}
 \frac{1}{K_\nu}\|x\|\le \nu^*(x)\le \frac{1}{k_\nu}\|x\|\quad\blue{\text{for all}\; x\in \mathbb{R}^n.}
\end{equation}

Observe first that  $B(0,1/K_\nu)\subseteq B_\nu\subseteq B(0,1/k_\nu)$ \red{and $B(0,k_\nu)\subseteq B_{\nu^*}\subseteq B(0,K_\nu)$.  So $B_\nu$ and $B_{\nu^*}$ satisfy the centering assumption after Definition~\ref{def:relmem} with $a=0$.} Hence
\[
\langle B_\nu \rangle\coloneqq \langle n \rangle+\langle k_\nu \rangle+\langle K_\nu \rangle
\]
may be regarded as the encoding length of $B_\nu$ in number of bits.  \red{A norm or unit-norm ball may therefore be encoded (for a Turing machine) in finitely many bits as  $(n, k_\nu, K_\nu) \in \mathbb{Q}^3$.  Whenever we discuss the computation of a norm, we implicitly assume knowledge of $(n, k_\nu, K_\nu)$, i.e., an algorithm would have access to their values.}

The main result of this section is the polynomial-time inter-reducibility between a norm and its dual.
\begin{theorem}\label{polduality}  
Let $\nu$ be a norm and $\nu^*$ be its dual norm. The \textsc{wmem} problem for the unit ball of $\nu^*$  is polynomial-time reducible to
the \textsc{wmem} problem for the unit ball of $\nu$.
\end{theorem}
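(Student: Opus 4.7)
My plan is to exploit the polarity relation $B_{\nu^*} = B_\nu^\circ$ together with the Yudin--Nemirovski equivalence between weak membership, weak validity, and weak optimization for centered convex bodies (\cite[Theorem~4.3.2]{GLS88}, already cited after Definition~\ref{def:relmem}). Since $\nu^*(x) = \max\{x^\mathsf{T} y : y \in B_\nu\}$ by definition of the dual norm, deciding whether $x \in B_{\nu^*}$ is really a weak validity question on $B_\nu$ at threshold $\gamma = 1$. The required centering hypothesis holds automatically because $B(0,1/K_\nu) \subseteq B_\nu \subseteq B(0,1/k_\nu)$ is part of the encoding $\langle B_\nu \rangle$, so Yudin--Nemirovski converts the given \textsc{wmem} oracle for $B_\nu$ into a \textsc{wval} oracle for $B_\nu$ at polynomial overhead.

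Given an input $(x,\delta)$ for \textsc{wmem} on $B_{\nu^*}$, I would first translate the Euclidean thickening and shrinking of $B_{\nu^*}$ into one-sided bounds on $\nu^*(x)$ via \eqref{constdualnrm} and the triangle inequality for $\nu^*$: the estimate $\nu^*(x) \le 1 + \delta/K_\nu$ suffices for $x \in S(B_{\nu^*},\delta)$ (take $y = x/\max(1,\nu^*(x))$ and bound $\|x-y\| \le K_\nu(\nu^*(x)-1)$), while $\nu^*(x) \le 1 - \delta/k_\nu$ suffices for $x \in S(B_{\nu^*},-\delta)$ (since then $\nu^*(x+z) \le \nu^*(x) + \|z\|/k_\nu \le 1$ for every $\|z\|\le \delta$). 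Next I feed $(c,\gamma,\varepsilon) = (x,1,\varepsilon)$ into the \textsc{wval} oracle for $B_\nu$, with a rational $\varepsilon$ chosen polynomial in $\delta$, $k_\nu$, $K_\nu$. Using the sandwiches $(1-\varepsilon K_\nu)B_\nu \subseteq S(B_\nu,-\varepsilon)$ and $S(B_\nu,\varepsilon) \subseteq (1+\varepsilon K_\nu)B_\nu$, which follow directly from \eqref{mnuequiv}, the two alternatives returned by \textsc{wval} become the estimates $\nu^*(x) \le (1+\varepsilon)/(1-\varepsilon K_\nu)$ and $\nu^*(x) \ge (1-\varepsilon)/(1+\varepsilon K_\nu)$ respectively. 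Picking $\varepsilon$ small enough to force the first bound below $1+\delta/K_\nu$ and the second strictly above $1-\delta/k_\nu$ then produces a valid \textsc{wmem} verdict for $B_{\nu^*}$.

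The main obstacle is mostly bookkeeping: verifying that the chosen $\varepsilon$ is rational with binary length polynomial in $\langle x \rangle$, $\langle \delta \rangle$, and $\langle B_\nu \rangle$, that only polynomially many oracle calls are incurred, and that the two sufficient conditions on $\nu^*(x)$ cover all of $\mathbb{R}$ — which they do because the intervals $(-\infty,1+\delta/K_\nu]$ and $(1-\delta/k_\nu,\infty)$ overlap for every $\delta > 0$. Once these constants are arranged the argument is just the classical polar-duality correspondence made quantitative through \eqref{mnuequiv} and \eqref{constdualnrm}, composed with the Yudin--Nemirovski reduction.
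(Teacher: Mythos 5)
Your strategy is in essence the paper's own: the support-function identity $\nu^*(x)=\max\{x^\mathsf{T}y:y\in B_\nu\}$ turns weak membership of $x$ in $B_{\nu^*}$ into a weak validity query on $B_\nu$ with threshold $\gamma=1$, and the Yudin--Nemirovski theorem converts the given \textsc{wmem} oracle for $B_\nu$ into the needed \textsc{wval} oracle (the paper packages this as Lemma~\ref{wmemwval} applied with $\nu^*$ in place of $\nu$, after a harmless rescaling). Your translation of the two \textsc{wval} alternatives into the bounds $\nu^*(x)\le(1+\varepsilon)/(1-\varepsilon K_\nu)$ and $\nu^*(x)\ge(1-\varepsilon)/(1+\varepsilon K_\nu)$ is correct, as is your first sufficient condition, that $\nu^*(x)\le 1+\delta/K_\nu$ forces $x\in S(B_{\nu^*},\delta)$.

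There is, however, a directional error in the second half of your case analysis. You prove that $\nu^*(x)\le 1-\delta/k_\nu$ implies $x\in S(B_{\nu^*},-\delta)$, and then, in the second \textsc{wval} alternative, you infer $x\notin S(B_{\nu^*},-\delta)$ from $\nu^*(x)>1-\delta/k_\nu$. That inference is the converse of what you established: you showed $1-\delta/k_\nu$ is a level below which membership in the shrunken ball is guaranteed, not a level above which it is excluded, and the excluding statement is false in general. For example, take $\nu^*(x)=\max(\lvert x_1\rvert,\lvert x_2\rvert/2)$ on $\mathbb{R}^2$, whose unit ball is $[-1,1]\times[-2,2]$; here the sharp upper constant in \eqref{constdualnrm} is $1/k_\nu=1$ (attained at $e_1$), so your threshold is $1-\delta$, yet for $0<\delta\le 1$ the point $x=(0,2-\delta)$ has $\nu^*(x)=1-\delta/2>1-\delta$ while $B(x,\delta)$ is contained in the unit ball, so $x\in S(B_{\nu^*},-\delta)$. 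What you need instead is the opposite containment $S(B_{\nu^*},-\delta)\subseteq(1-\delta/K_\nu)B_{\nu^*}$ --- the right-hand inclusion of \eqref{auxineq4} applied to $\nu^*$, whose equivalence constants are $1/K_\nu$ and $1/k_\nu$ --- which gives the valid implication: $\nu^*(x)>1-\delta/K_\nu$ implies $x\notin S(B_{\nu^*},-\delta)$. With this corrected threshold the two intervals $(-\infty,1+\delta/K_\nu]$ and $(1-\delta/K_\nu,\infty)$ still cover $\mathbb{R}$ and overlap, and a rational $\varepsilon$ of order $\delta/\bigl(K_\nu(1+K_\nu)\bigr)$ places both \textsc{wval} outcomes in the correct interval, so the rest of your argument, including the complexity bookkeeping, goes through unchanged.
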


We will prove this result via two intermediate lemmas. 
A key step in our proof depends on the Yudin--Nemirovski Theorem \red{\cite{YN76}}, which may be stated as follows \cite[Theorem~4.3.2]{GLS88}.
\begin{theorem}[Yudin--Nemirovski]\label{thm:YN}
The \textsc{wval} problem for $B_\nu$ is polynomial-time reducible to the \textsc{wmem} problem for $B_{\nu}$. More generally this holds for any convex set with nonempty interior $K \subseteq \mathbb{R}^n$ for which we have knowledge of $a \in \mathbb{Q}^n$ and $0< r \le R \in \mathbb{Q}$ such that $B(a,r) \subseteq K \subseteq B(0,R)$.
\end{theorem}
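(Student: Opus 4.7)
The plan is to reduce the \textsc{wval} problem for $K$ to an optimization problem and solve it by the (shallow-cut) ellipsoid method, using the \textsc{wmem} oracle for $K$ as the only access to the set.  Concretely, given $c \in \mathbb{Q}^n$ and rationals $\gamma, \varepsilon > 0$, I set up the search body $K_\gamma \coloneqq K \cap \{x : c^\mathsf{T} x \geq \gamma\}$ and try to decide whether $K_\gamma$ is (approximately) empty.  The centering data $B(a,r) \subseteq K \subseteq B(0,R)$ gives an initial ellipsoid containing $K_\gamma$ with a volume bound $\mathrm{vol}(B(0,R))$ and, on the other hand, an a priori lower bound on $\mathrm{vol}(K_\gamma)$ whenever it is not too thin, so the ellipsoid method will terminate within a number of iterations polynomial in $\langle n \rangle + \langle a \rangle + \langle r \rangle + \langle R \rangle + \langle c \rangle + \langle \gamma \rangle + \langle 1/\varepsilon \rangle$.

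The main work is the construction of an approximate separation subroutine from \textsc{wmem} alone, since \textsc{wmem} only returns yes/no verdicts and not normal vectors.  My plan is the following.  At each iteration, given the center $z_k$ of the current ellipsoid $E_k$, first test the objective cut $c^\mathsf{T} z_k \geq \gamma$; if it fails, cut $E_k$ with the hyperplane $\{x : c^\mathsf{T} x = c^\mathsf{T} z_k\}$.  Otherwise, query \textsc{wmem} at $z_k$ with a precision $\delta_k$ that shrinks geometrically with $k$.  If the oracle certifies $z_k \in S(K,\delta_k)$, then $z_k$ is the desired witness in $S(K,\varepsilon)$ with $c^\mathsf{T} z_k \geq \gamma - \varepsilon$, and we return it.  If the oracle returns $z_k \notin S(K,-\delta_k)$, I perform a binary search on the segment $[a, z_k]$ using \textsc{wmem} to locate a point $p_k$ that lies simultaneously in $S(K,\delta_k)$ and within distance $O(\delta_k)$ of the complement of $K$; the unit vector $u_k \coloneqq (z_k - p_k)/\|z_k - p_k\|$ then supplies an approximate outward normal, and the halfspace $\{x : u_k^\mathsf{T} x \geq u_k^\mathsf{T} p_k\}$ is an approximately valid shallow cut because $B(a,r) \subseteq K$ controls, by elementary convex geometry, the angle between $u_k$ and any true outward normal at a nearby genuine boundary point.

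If, after the prescribed polynomial number of iterations, no feasible witness has been produced, the accumulated volume shrinkage forces $K_\gamma \cap S(\cdot, -\varepsilon)$ to be empty, and I output the \textsc{wval} assertion $c^\mathsf{T} x \leq \gamma + \varepsilon$ for all $x \in S(K,-\varepsilon)$.  The main obstacle, and where the bulk of the technical care goes, is precisely in quantifying the "shallowness" of the cuts produced above: the normal $u_k$ is only approximately outward, so a naive central cut would not shrink volume by the required factor.  This is resolved using the shallow-cut variant of the ellipsoid method together with a careful matching of $\delta_k$ to the current ellipsoid's volume, ensuring that the geometric error in $u_k$ is dominated by the current cut depth.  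The polynomial dependence of iteration count on $\log(R/r)$ and $\log(1/\varepsilon)$, combined with the polynomial cost per iteration (one \textsc{wmem} call plus $O(\log(1/\delta_k))$ binary-search calls), delivers the claimed polynomial-time reduction.
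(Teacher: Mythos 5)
The paper itself does not prove this statement: it is the classical Yudin--Nemirovski theorem, quoted from \cite{YN76} and \cite[Theorem~4.3.2]{GLS88} (the paper even remarks that the original result is stronger, with \textsc{wviol} in place of \textsc{wval}). So your proposal has to be judged against the standard literature proof, which, like yours, runs a (shallow-cut) ellipsoid method driven solely by the \textsc{wmem} oracle; at the level of overall strategy you are on the right track, and your infeasibility side (volume lower bound for the shrunken feasible set when validity fails) is standard, if only gestured at.

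The genuine gap is in your approximate separation subroutine, which is exactly the crux of the theorem. From the answer $z_k\notin S(K,-\delta_k)$ you binary-search on $[a,z_k]$ for a near-boundary point $p_k$ and then take $u_k=(z_k-p_k)/\|z_k-p_k\|$ as the cut normal, claiming that $B(a,r)\subseteq K$ controls the angle between $u_k$ and a true outward normal. What the centering data actually gives is only $\cos\angle(u_k,w)\ge r/(2R)$ for a supporting normal $w$ at $p_k$, so the angle can be nearly $\pi/2$ when $R/r$ is large; and even a small angular error is fatal for an elongated body. For example, with $K=[0,L]\times[0,1]$, $a=(1/2,1/2)$, and $z_k$ just above the top face but displaced horizontally, the halfspace through $p_k$ with normal $u_k$ cuts off points of $K$ at depth proportional to $L$, possibly including the maximizer of $c^\mathsf{T}x$; the algorithm could then wrongly assert validity. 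No choice of the membership precision $\delta_k$ repairs this, because the error is geometric (angular), determined by $K$ and the positions of $a$ and $z_k$, not an artifact of oracle noise --- so ``matching $\delta_k$ to the current ellipsoid's volume'' does not make the cut even approximately valid. This is precisely why a usable (shallow) cut cannot be extracted from weak membership by a single line search: the known proof constructs the shallow separation oracle through a substantially more elaborate argument (the lemmas leading to \cite[Theorem~4.3.2]{GLS88}, several pages long), which is also why the paper cites the theorem rather than reproving it. As written, your reduction is not a proof; the missing idea is a correct implementation of approximate separation from membership.
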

The original Yudin--Nemirovski Theorem is in fact stronger than the version stated here, allowing the weak violation problem \textsc{wviol} to be reduced to \textsc{wmem}. Nevertheless in this article we will only require the weaker result with \textsc{wval} in place of \textsc{wviol}.

For a compact set $K\subset \mathbb{R}^n$ and $c\in \mathbb{R}^n$, \blue{the \emph{support function} of $K$ at $c$ is}
\[
\max(K,c)\coloneqq \max \{c^\mathsf{T} x  :  x \in K\}.
\]
In particular, observe that 
\[
\nu(x)=\max(B_{\nu^*},x).
\]
\begin{lemma}\label{auxineq}  Let $\nu$ be a norm on $\mathbb{R}^n$ and $\delta>0$.  Then we have inclusions
\begin{gather}
(1+k_\nu\delta)B_\nu\subseteq S(B_\nu,\delta)\subseteq (1+K_\nu\delta)B_\nu,\label{auxineq3}
\\
(1-K_\nu\delta)B_\nu\subseteq S(B_\nu,-\delta)\subseteq (1-k_\nu\delta)B_\nu,\label{auxineq4}
\end{gather}
whenever  $K_\nu\delta< 1$, and the inequalities
\begin{gather}
\label{auxineq1} \left(1-\frac{\delta}{k_\nu}\right)\nu(x) \le \max \bigl(S(B_{\nu^*},-\delta),x\bigr)\le 
\left(1-\frac{\delta}{K_\nu}\right)\nu(x),\\
\label{auxineq2}
\left(1+\frac{\delta}{K_\nu}\right)\nu(x)\le \max \bigl(S(B_{\nu^*},\delta),x\bigr)\le \left(1+\frac{\delta}{k_\nu}\right)\nu(x),
\end{gather}
whenever $\delta/k_\nu< 1$.
\end{lemma}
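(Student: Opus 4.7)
The plan is to prove the four statements in order, establishing \eqref{auxineq3} and \eqref{auxineq4} directly from definitions and the norm-equivalence bounds, then obtaining \eqref{auxineq1} and \eqref{auxineq2} as corollaries by applying \eqref{auxineq3} and \eqref{auxineq4} to $\nu^*$ and taking support functions.

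For \eqref{auxineq3}, I will argue both inclusions separately. For the outer inclusion, any $x \in S(B_\nu,\delta)$ can be written as $x = y + z$ with $\nu(y) \le 1$ and $\|z\| \le \delta$, so the triangle inequality and the upper bound $\nu(z) \le K_\nu\|z\|$ yield $\nu(x) \le 1 + K_\nu\delta$. For the inner inclusion, given $x$ with $\nu(x) \le 1 + k_\nu\delta$, if $\nu(x) \le 1$ then $x \in B_\nu \subseteq S(B_\nu,\delta)$; otherwise set $y = x/\nu(x) \in B_\nu$ and estimate $\|x - y\| = \|x\|(\nu(x)-1)/\nu(x)$, using $\|x\| \le \nu(x)/k_\nu$ to conclude $\|x - y\| \le \delta$. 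For \eqref{auxineq4} the argument is analogous: if $\nu(x) \le 1 - K_\nu\delta$, then for every $z$ with $\|z\| \le \delta$ we have $\nu(x+z) \le \nu(x) + K_\nu\delta \le 1$, giving $B(x,\delta) \subseteq B_\nu$; conversely, if $x \in S(B_\nu,-\delta)$ and $x \ne 0$, applying $B(x,\delta) \subseteq B_\nu$ to the point $x + \delta x/\|x\|$ gives $\nu(x)(1 + \delta/\|x\|) \le 1$, which combined with $\nu(x)/\|x\| \ge k_\nu$ yields $\nu(x) \le 1 - k_\nu\delta$.

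To derive \eqref{auxineq1} and \eqref{auxineq2}, I will apply what has just been proved to the dual norm $\nu^*$, using the constants $k_{\nu^*} = 1/K_\nu$ and $K_{\nu^*} = 1/k_\nu$ coming from \eqref{constdualnrm}. From \eqref{auxineq3} for $\nu^*$,
\[
(1 + \delta/K_\nu)B_{\nu^*} \subseteq S(B_{\nu^*},\delta) \subseteq (1 + \delta/k_\nu)B_{\nu^*},
\]
and from \eqref{auxineq4} for $\nu^*$ (valid since $K_{\nu^*}\delta = \delta/k_\nu < 1$),
\[
(1 - \delta/k_\nu)B_{\nu^*} \subseteq S(B_{\nu^*},-\delta) \subseteq (1 - \delta/K_\nu)B_{\nu^*}.
\]
Now $\max(K,x)$ is monotone in $K$ for convex bodies containing the origin, and positively homogeneous in the set, so that $\max(\alpha B_{\nu^*},x) = \alpha\, \max(B_{\nu^*},x) = \alpha\nu(x)$ for every $\alpha > 0$. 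Applying $\max(\cdot,x)$ to the two chains of inclusions above yields \eqref{auxineq2} and \eqref{auxineq1} respectively.

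The main obstacle is really just bookkeeping: keeping the four constants $k_\nu, K_\nu, k_{\nu^*}, K_{\nu^*}$ straight through the dualization, and handling the edge case $\nu(x) \le 1$ in the forward direction of \eqref{auxineq3}. The constraints $K_\nu\delta < 1$ and $\delta/k_\nu < 1$ are exactly what is needed to keep the scaling factors positive, so no issue of nonconvex or empty sets arises.
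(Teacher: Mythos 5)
Your proposal is correct and follows essentially the same route as the paper: the inclusions \eqref{auxineq3}--\eqref{auxineq4} rest on the norm-equivalence bounds $k_\nu\|z\|\le\nu(z)\le K_\nu\|z\|$, and \eqref{auxineq1}--\eqref{auxineq2} are then obtained exactly as you say, by applying those inclusions to $\nu^*$ with the constants $1/K_\nu$ and $1/k_\nu$ from \eqref{constdualnrm} and taking support functions. The only difference is presentational: you verify the inclusions pointwise (decomposing $x=y+z$ or rescaling $x$ to the unit sphere), whereas the paper phrases the same estimates as inclusions between unions of Euclidean balls and $\nu$-balls.
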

\begin{proof}  To prove \eqref{auxineq3}, observe that
\[
k_\nu B_\nu\subseteq B(0,1)\subseteq K_\nu B_\nu,\qquad  k_\nu B^\circ_\nu\subseteq B^\circ(0,1)\subseteq K_\nu B^\circ_\nu,
\]
and thus
\[
B_\nu(x,k_\nu \delta) \subseteq B(x,\delta) \subseteq B_\nu(x,K_\nu \delta),\qquad
B^\circ_\nu(x,k_\nu \delta) \subseteq B^\circ(x,\delta) \subseteq B^\circ_\nu(x,K_\nu \delta).
\]
Also, $ \bigcup_{x\in B_\nu} B_\nu(x, r) =B_\nu(0,1+r)$ by the defining properties of a norm. Hence
\[
S(B_\nu,\delta) =\bigcup_{x\in B_\nu} B(x,\delta) \subseteq \bigcup_{x\in B_\nu} B_\nu(x, K_\nu\delta) =B_\nu(0,1+K_\nu\delta).
\]
On the other  hand,
\[
S(B_\nu,\delta) =\bigcup_{x\in B_\nu} B(x,\delta) \supseteq \bigcup_{x\in B_\nu} B_\nu(x, k_\nu\delta)  =B_\nu(0, 1+k_\nu\delta).
\]

To prove  \eqref{auxineq4}, let $T=\bigcup_{x\, : \,\nu(x)=1} B^\circ(x,\delta)$ and so $S(B_\nu,-\delta)=B_\nu\setminus T$.
Let 
\[T_1=\bigcup_{x\, : \,\nu(x)=1}B^\circ_\nu(x, K_\nu\delta), \qquad T_2=\bigcup_{x\, : \, \nu(x)=1} 
B^\circ_\nu(x, k_\nu\delta).
\]
Since $T_1\supseteq T$ and $T_2\subseteq T$, we obtain
\[
S(B_\nu,-\delta)\supseteq B_\nu\setminus T_1 = (1-K_\nu\delta)B_\nu,
\qquad
S(B_\nu,-\delta)\subseteq B_\nu\setminus T_2 =(1-k_\nu\delta)B_\nu.
\]
The last two inequalities follow from the first two inclusions and \eqref{constdualnrm}.
\end{proof}

\begin{lemma}\label{wmemwval} Let $k_\nu\ge 2$.  Then the solution to \textsc{wval} problem for $B_{\nu^*}$ gives the solution to \textsc{wmem} problem for $B_\nu$.
\end{lemma}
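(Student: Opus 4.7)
The plan is to solve \textsc{wmem} for $B_\nu$ with a single call to the \textsc{wval} oracle for $B_{\nu^*}$, leveraging the duality identity $\nu(x)=\max(B_{\nu^*},x)$ recorded just before the lemma. Under that identity, a \textsc{wval} query about $B_{\nu^*}$ with $c=x$ and $\gamma=1$ is exactly a comparison of $\nu(x)$ against $1$, which is what weak membership in $B_\nu$ reduces to.

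Given the instance $(x,\delta)$, first handle the degenerate regime $\delta>1/k_\nu$: by $B_\nu\subseteq B(0,1/k_\nu)$ from \eqref{mnuequiv}, $\operatorname{diam}(B_\nu)\le 2/k_\nu$, so no Euclidean ball of radius $\delta$ fits inside $B_\nu$, hence $S(B_\nu,-\delta)=\varnothing$ and we trivially assert $x\notin S(B_\nu,-\delta)$. In the main regime $\delta\le 1/k_\nu$, invoke \textsc{wval} for $B_{\nu^*}$ with $c=x$, $\gamma=1$, and $\varepsilon=\delta$. If the oracle asserts $x^\mathsf{T} y\le 1+\delta$ for all $y\in S(B_{\nu^*},-\delta)$, then the left inequality in \eqref{auxineq1} yields $(1-\delta/k_\nu)\nu(x)\le 1+\delta$; rearranging gives $\nu(x)\le 1+k_\nu\delta$, so by \eqref{auxineq3} we have $x\in(1+k_\nu\delta)B_\nu\subseteq S(B_\nu,\delta)$ and we assert $x\in S(B_\nu,\delta)$. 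Otherwise the oracle asserts the existence of $y\in S(B_{\nu^*},\delta)$ with $x^\mathsf{T} y\ge 1-\delta$; the right inequality in \eqref{auxineq2} then yields $(1+\delta/k_\nu)\nu(x)\ge 1-\delta$, hence $\nu(x)>1-k_\nu\delta$, so by \eqref{auxineq4} we have $x\notin(1-k_\nu\delta)B_\nu\supseteq S(B_\nu,-\delta)$ and we assert $x\notin S(B_\nu,-\delta)$.

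The main technical point is verifying that the single choice $\varepsilon=\delta$ makes both rearrangements go through, i.e., that $(1+\delta)/(1-\delta/k_\nu)\le 1+k_\nu\delta$ and $(1-\delta)/(1+\delta/k_\nu)>1-k_\nu\delta$ simultaneously. Clearing denominators, these reduce to $\delta\le k_\nu-1-1/k_\nu$ and $k_\nu-1/k_\nu+\delta>1$; the hypothesis $k_\nu\ge 2$ together with the preprocessing bound $\delta\le 1/k_\nu$ delivers both, since $k_\nu\ge 2$ yields $k_\nu-1-1/k_\nu\ge 1/2\ge 1/k_\nu\ge\delta$ and $k_\nu-1/k_\nu\ge 3/2>1$. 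This is precisely where the hypothesis $k_\nu\ge 2$ is used.
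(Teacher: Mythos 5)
Your proof is correct and follows essentially the same route as the paper's: a single \textsc{wval} call on $B_{\nu^*}$ with $c=x$, $\gamma=1$, $\varepsilon=\delta$, translated back through the four inequalities of Lemma~\ref{auxineq}, with the hypothesis $k_\nu\ge 2$ entering through exactly the same two rational inequalities (which you verify explicitly where the paper only asserts them). The only other difference is cosmetic: you dispose of large $\delta$ by noting $S(B_\nu,-\delta)=\varnothing$, whereas the paper simply restricts to $\delta<1/2$.
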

\begin{proof} Let $x\in\mathbb{Q}^n$ and  $\delta\in (0, \frac{1}{2})\cap \mathbb{Q}$.  We choose $\gamma=1$.
Suppose that $x^\mathsf{T} y\le 1+ \delta$ for all $y\in S(B_{\nu^*},-\delta)$.  Then $\max\bigl(S(B_{\nu^*},-\delta),x\bigr)\le 1+\delta$  and by \eqref{auxineq1} we have
\[
\nu(x)\le \frac{1+\delta}{1-\delta/k_\nu}.
\]
Since $k_\nu\ge 2$, it follows  that
\[
\frac{1+\delta}{1-\delta/k_\nu}\le
1+k_\nu\delta.
\]
It follows from \eqref{auxineq3} that $x\in S(B_\nu,\delta)$.

Suppose that $x^\mathsf{T} y> 1- \delta$ for some $y\in S(B_{\nu^*},\delta)$. Then $\max\bigl(\red{S(B_{\nu^*},\delta),x}\bigr)>1-\delta$ and we deduce from \eqref{auxineq2}  that
\[
\nu(x)> \frac{1-\delta}{1+ \delta/k_\nu}.
\]
As straightforward calculation shows that
\[
\frac{1-\delta}{1+ \delta/k_\nu }\ge 1-k_\nu\delta.
\]
It follows from \eqref{auxineq4} that $x\notin S(B_\nu,-\delta)$.   
\end{proof} 

\begin{proof}[of Theorem~\ref{polduality}]
We observe that the assumption $k_\nu\ge 2$ in Lemma~\ref{wmemwval} is not restrictive. Let $r\ge 2/k_\nu$.  Then a new norm defined by $ \nu_r(x)=r\nu(x)$ would satisfy the assumption.
Now note that $x\in B_\nu$ if and only if $\frac{1}{r}x\in B_{\nu_r}$.  With this observation,
Theorem~\ref{polduality} follows from
\[
\text{\textsc{wmem} for }\red{B_{\nu^*}}  \Rightarrow  
\text{\textsc{wval} for }\red{B_{\nu^*}}  \Rightarrow  
\text{\textsc{wmem} for }\red{B_{\nu}}  \Rightarrow  
\text{\textsc{wval} for }\red{B_{\nu}}  \Rightarrow 
\text{\textsc{wmem} for }\red{B_{\nu^*}}.
\]
Here $\mathscr{P} \Rightarrow \mathscr{Q}$ means that $\mathscr{Q}$ is polynomial-time reducible to $\mathscr{P}$. Yudin--Nemirovski Theorem gives the first and third reductions whereas Lemma~\ref{wmemwval} gives the second and last reductions.
\end{proof}

Since taking \blue{the} dual of a dual norm gives us back the original norm, we have the following corollary.
\begin{corollary}
The \textsc{wmem} problem for the unit ball of a norm $\nu$  is polynomial-time decidable (resp.\ NP-hard)
if and only if the \textsc{wmem} problem for the unit ball of the dual norm $\nu^*$ is polynomial-time decidable (resp.\ NP-hard).
\end{corollary}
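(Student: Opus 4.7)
The plan is to derive the corollary by applying Theorem~\ref{polduality} twice and then invoking the classical biduality identity $(\nu^*)^* = \nu$. The theorem already supplies one direction directly: the \textsc{wmem} problem for $B_{\nu^*}$ is polynomial-time reducible to the \textsc{wmem} problem for $B_\nu$. To obtain the reverse reduction, I would apply the same theorem with $\nu^*$ playing the role of $\nu$, which yields that the \textsc{wmem} problem for $B_{(\nu^*)^*} = B_\nu$ is polynomial-time reducible to the \textsc{wmem} problem for $B_{\nu^*}$.

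Before invoking the theorem in this substituted form, I would verify that the centering and encoding hypotheses required to run Theorem~\ref{polduality} on $\nu^*$ are satisfied. This is immediate from \eqref{constdualnrm}: one may take $k_{\nu^*} = 1/K_\nu$ and $K_{\nu^*} = 1/k_\nu$, both of which are rational whenever $k_\nu$ and $K_\nu$ are, and the encoding length $\langle B_{\nu^*}\rangle = \langle n\rangle + \langle 1/K_\nu\rangle + \langle 1/k_\nu\rangle$ is polynomially bounded in $\langle B_\nu\rangle$ since $\langle 1/q\rangle = \langle q\rangle$ for positive rational $q$. The centering assumption for $B_{\nu^*}$ at the origin with inner radius $k_\nu$ is likewise guaranteed by \eqref{constdualnrm}. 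Hence both invocations of the theorem go through cleanly and together they establish polynomial-time inter-reducibility of \textsc{wmem} for $B_\nu$ and \textsc{wmem} for $B_{\nu^*}$.

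Polynomial-time inter-reducibility then yields both equivalences at once. If \textsc{wmem} for $B_\nu$ admits a polynomial-time algorithm, composing it with the reduction produces a polynomial-time algorithm for \textsc{wmem} for $B_{\nu^*}$, and symmetrically. For NP-hardness, the contrapositive argument (as recorded in the discussion just after Definition~\ref{def:mem} and in Section~\ref{sec:weak}) is standard: a polynomial-time reduction transfers NP-hardness from the source to the target. No substantive obstacle arises; the entire corollary is a formal consequence of Theorem~\ref{polduality} together with the involutive nature of norm duality, with the only mild bookkeeping being the verification that the encoding of $B_{\nu^*}$ is of size polynomial in that of $B_\nu$.
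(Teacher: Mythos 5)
Your proposal is correct and matches the paper's (unwritten, one-line) argument exactly: apply Theorem~\ref{polduality} to both $\nu$ and $\nu^*$ and use $(\nu^*)^*=\nu$ to get inter-reducibility, from which both equivalences follow. The explicit check that $k_{\nu^*}=1/K_\nu$, $K_{\nu^*}=1/k_\nu$ keeps the encoding polynomially bounded is a welcome detail the paper leaves implicit.
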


Since every centrally symmetric compact convex set with nonempty interior is a norm ball for some norm and its \emph{polar dual} is exactly the norm ball for the corresponding dual norm, we immediately have the following.
\begin{corollary}\label{cor:cscc}
Let $C$ be a  centrally symmetric compact convex set with nonempty interior in $\mathbb{R}^n$ and
\[
C^* = \{ x \in \mathbb{R}^n : x^\mathsf{T} y \le 1 \}
\]
be its polar dual. Then \textsc{wmem} in $C$ is polynomial-time inter-reducible to the \textsc{wmem} in $C^*$. In particular, if one is polynomial-time decidable  (resp.\ NP-hard), then so is the other.
\end{corollary}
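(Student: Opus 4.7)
The plan is to reduce Corollary~\ref{cor:cscc} to the previous corollary (and hence to Theorem~\ref{polduality}) by identifying the data of a centrally symmetric compact convex set $C$ with that of a norm ball. Concretely, I would define the Minkowski functional (gauge)
\[
\nu_C(x)\coloneqq \inf\{t>0 : x\in tC\},
\]
and check the four standard properties required for a norm: positive homogeneity and the triangle inequality from convexity and $0\in\operatorname{int}C$; positive definiteness from compactness (so $\nu_C(x)=0$ forces $x=0$); and symmetry $\nu_C(-x)=\nu_C(x)$ from the central symmetry hypothesis $C=-C$. Compactness and the presence of an interior point then yield $C=B_{\nu_C}$.

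Next I would verify the identity $C^{*}=B_{\nu_C^{*}}$. Unwinding definitions, $x\in C^{*}$ means $x^{\mathsf T}y\le 1$ for all $y\in C=B_{\nu_C}$, which by the definition of the dual norm is exactly $\nu_C^{*}(x)\le 1$, i.e.\ $x\in B_{\nu_C^{*}}$. (The central symmetry of $C$ ensures that the sup of $x^{\mathsf T}y$ over $B_{\nu_C}$ coincides with the sup of $\lvert x^{\mathsf T}y\rvert$, matching the convention used in defining $\nu^{*}$ earlier in this section.) Thus $C$ and $C^{*}$ are, respectively, the unit balls of $\nu_C$ and its dual norm.

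To invoke Theorem~\ref{polduality} honestly, I would also check that the encoding of $C$ supplies the centering data $(n,k_{\nu_C},K_{\nu_C})$ required in that theorem. Since $C$ is compact with nonempty interior and is centrally symmetric, there exist rationals $0<r\le R$ with $B(0,r)\subseteq C\subseteq B(0,R)$, and this is implicitly part of the input to the \textsc{wmem} problem for $C$ (it is the standing centering assumption made just after Definition~\ref{def:relmem}). These inclusions give $k_{\nu_C}=1/R$ and $K_{\nu_C}=1/r$ in \eqref{mnuequiv}, with analogous bounds $B(0,1/R)\subseteq C^{*}\subseteq B(0,1/r)$ for the polar, so the centering assumption holds for $C^{*}$ as well.

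Having made these identifications, the corollary follows immediately: Theorem~\ref{polduality} and its symmetric counterpart (obtained by swapping the roles of $\nu_C$ and $\nu_C^{*}$, using $(\nu_C^{*})^{*}=\nu_C$) show that \textsc{wmem} for $B_{\nu_C}=C$ and \textsc{wmem} for $B_{\nu_C^{*}}=C^{*}$ are polynomial-time inter-reducible, whence one is polynomial-time decidable (respectively NP-hard) iff the other is. The only mildly delicate point is the bookkeeping of the centering data above; once that is in hand, the proof is essentially a translation of vocabulary from convex bodies to norms.
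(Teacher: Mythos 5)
Your proof is correct and is exactly the identification the paper relies on: it states the corollary as an immediate consequence of the fact that a centrally symmetric compact convex body is the unit ball of its Minkowski gauge and that the polar dual is the unit ball of the dual norm, which is precisely what you verify (with the welcome extra care about the absolute value in the dual-norm convention and the centering data). No substantive difference from the paper's route.
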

 
\section{Approximation of dual norms}\label{sec:approx}

In this section we  show that for a given norm $\nu:\mathbb{R}^n \to [0,\infty)$ satisfying 
\eqref{mnuequiv} for $k_\nu, K_\nu \in \mathbb{Q}$, \textsc{wmem} in $B_\nu$ with respect to  $\delta \in \mathbb{Q}$ is polynomial-time inter-reducible with a $\delta$-approximation of the norm $\nu$.

\begin{definition}\label{def:approx}
Let  $\nu:\mathbb{R}^n \to [0,\infty)$ be a norm satisfying 
\eqref{mnuequiv} for $k_\nu, K_\nu \in \mathbb{Q}$.
The \emph{approximation problem} (\textsc{approx}) for $\nu$ is:
Let $\delta \in \mathbb{Q}$ and $\delta > 0$.  Given any $x\in \mathbb{Q}^n$ with $1/2 < \|x\| < 3/2$, compute an approximation $\omega(x) \in \mathbb{Q}$ such that
\begin{equation}\label{approxnu}
\blue{\nu(x) -\delta<\omega(x)<\nu(x) +\delta}.
\end{equation}
We call $\omega$ a \emph{$\delta$-approximation} of $\nu$.
\end{definition}
\red{The annulus $1/2 < \|x\| < 3/2$, where $x$ has rational coordinates, is intended as a rational approximation of the unit sphere $\|x\|=1$ in $\mathbb{R}^n$ --- points on the unit sphere that do not have rational coordinates can be approximated by rational points in the annulus}
The requirement that $1/2 < \|x\| < 3/2$ is not restrictive since we may always scale any given $x$ to meet this condition in polynomial-time. Note that an approximation problem has $n+\langle \delta \rangle+\langle K_\nu \rangle+\langle k_\nu \rangle $ input bits. If we say that such  a problem can be solved in polynomial time, we mean time polynomial in this number of input bits.

\begin{theorem}\label{nuapproxwm}
Let  $\nu:\mathbb{R}^n \to [0,\infty)$ be a norm satisfying 
\eqref{mnuequiv} for $k_\nu, K_\nu \in \mathbb{Q}$. Then the following problems are polynomial-time inter-reducible:
\begin{enumerate}[\upshape (i)]
\item The approximation problem for $\nu$.
\item The weak membership problem for $B_\nu$.
\end{enumerate}

\end{theorem}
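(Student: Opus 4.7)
The plan is to establish the two implications separately, both leveraging Lemma~\ref{auxineq} to translate between norm values and membership information.

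For \textsc{wmem} for $B_\nu$ $\Rightarrow$ \textsc{approx} for $\nu$, I would perform a binary search for $\nu(x)$. Given $x$ with $1/2 < \|x\| < 3/2$, the equivalence \eqref{mnuequiv} confines $\nu(x)$ to $[k_\nu/2,\, 3K_\nu/2]$. I maintain a rational enclosure $[a,b] \ni \nu(x)$ and, at each round, set $\alpha = (a+b)/2$ and call the \textsc{wmem} oracle on the input $(x/\alpha,\, \delta')$ for a small rational $\delta'$ chosen below. Since $x/\alpha \in B_\nu$ iff $\nu(x) \le \alpha$, Lemma~\ref{auxineq} converts the oracle's response into an interval update: an answer $x/\alpha \in S(B_\nu, \delta')$ implies $\nu(x) \le \alpha(1 + K_\nu\delta')$ by \eqref{auxineq3}, so $b$ is replaced by $\alpha(1 + K_\nu\delta')$; an answer $x/\alpha \notin S(B_\nu, -\delta')$ implies $\nu(x) > \alpha(1 - K_\nu\delta')$ by \eqref{auxineq4}, so $a$ is replaced by $\alpha(1 - K_\nu\delta')$. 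A routine recurrence shows that the width $W_m$ satisfies $W_{m+1} \le W_m/2 + C K_\nu^2 \delta'$ for an absolute constant $C$; choosing $\delta'$ to be an appropriate inverse polynomial in $\delta$, $K_\nu$, $k_\nu$ and iterating $m = O(\log(K_\nu/\delta))$ times drives $W_m$ below $\delta$, whereupon the midpoint $(a+b)/2$ serves as an $\omega(x)$ satisfying \eqref{approxnu}.

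For the reverse direction, I first dispose of two extreme cases of the \textsc{wmem} input $(x, \delta)$ directly via \eqref{mnuequiv}: if $K_\nu\|x\| \le 1$ then $\nu(x) \le 1$, so $x \in B_\nu \subseteq S(B_\nu, \delta)$; if $k_\nu\|x\| > 1$ then $\nu(x) > 1$, so $x \notin B_\nu \supseteq S(B_\nu, -\delta)$. Both tests are decidable since $\|x\|^2 \in \mathbb{Q}$. In the remaining range $1/K_\nu < \|x\| \le 1/k_\nu$, a short binary search among dyadic rationals produces in polynomial time a rational $\lambda > 0$ with $1/2 < \lambda\|x\| < 3/2$. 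Invoking \textsc{approx} on $\lambda x$ with precision $\eta = k_\nu\lambda\delta/2$ returns $\omega(\lambda x)$ with $|\omega(\lambda x) - \lambda \nu(x)| < \eta$, hence $|\omega(\lambda x)/\lambda - \nu(x)| < k_\nu\delta/2$. Comparing $\omega(\lambda x)$ with $\lambda$ and invoking Lemma~\ref{auxineq} resolves the membership question: if $\omega(\lambda x) \le \lambda$ then $\nu(x) < 1 + k_\nu\delta$ and \eqref{auxineq3} gives $x \in S(B_\nu, \delta)$; otherwise $\nu(x) > 1 - k_\nu\delta$ and \eqref{auxineq4} gives $x \notin S(B_\nu, -\delta)$.

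The main obstacle is the precision accounting in the binary-search direction: $\delta'$ must be small enough that the multiplicative slack $1 \pm K_\nu\delta'$ at every iteration does not prevent the enclosure from contracting to width $\delta$, yet $\langle \delta' \rangle$ must remain polynomial in the input so that every oracle call is of polynomial length. The reverse direction is essentially a single scaled oracle query followed by a comparison with $1$, and the only care needed there is to choose the scaling $\lambda$ so that $\lambda x$ lies in the annulus required by Definition~\ref{def:approx}.
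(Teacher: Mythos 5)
Your proposal is correct and follows essentially the same route as the paper: both directions rest on Lemma~\ref{auxineq}, with a single rescaled oracle call to \textsc{approx} (precision $\Theta(k_\nu\delta)$, threshold near $1$) deciding \textsc{wmem}, and a bisection driven by \textsc{wmem} oracle calls producing the $\delta$-approximation. The only cosmetic difference is that the paper chooses the per-iteration tolerance $\varepsilon$ adaptively so the enclosing interval contracts by an exact factor of $3/4$, whereas you fix $\delta'$ once and absorb the resulting additive slack into the recurrence $W_{m+1}\le W_m/2 + CK_\nu^2\delta'$; both yield the required polynomial bound on the number and size of oracle calls.
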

\begin{proof}  Let us use (i) as an oracle and solve (ii).  Let $x\in\mathbb{Q}^n$ and a rational $\delta>0$ be given.
If $\|x\|\le 1/K_\nu$, then $\nu(x)\le 1$, and so $x\in S(B_\nu,\delta)$.
If $\|x\|\ge 1/k_\nu$, then $\nu(x)\ge 1$, and so $x\notin S(B_\nu,-\delta)$. 

It remains to check the case $\|x\|\in (1/K_\nu, 1/k_\nu)$. Let $r\in (2\|x\|/3, 2\|x\|)\cap \mathbb{Q}$ and let $y\coloneqq  x/ r$. Observe that $\nu(y)\in (k_\nu/2,3K_\nu/2)$. Now let  $\varepsilon=k_\nu^2\delta/4$ and \blue{let} $\omega(y)$ be an $\varepsilon$-approximation of $\nu(y)$. Assume first that 
\[
r\omega(y)\le 1+k_\nu\delta -\frac{2 \varepsilon}{k_\nu}=1+\frac{k_\nu\delta}{2}.
\]
Then
\[
\nu(x)= r\nu(y)< r(\omega(y)+\varepsilon)< r\omega(y)+\frac{2}{k_\nu}\varepsilon\le 1+k_\nu\delta,
\]
and \eqref{auxineq3} yields that $x\in S(B_\nu,\delta)$. Assume now that
\[
r\omega(y)> 1+\frac{k_\nu\delta}{2}.
\]
Then
\[
\nu(x)>r(\omega(y)-\varepsilon)\ge r\omega(y)-\frac{2\varepsilon}{k_\nu}>1+\frac{k_\nu\delta}{2}-\frac{k_\nu\delta}{2}=1
\]
and so $x\notin S(B_\nu,-\delta)$. This shows that we may decide weak membership in $B_\nu$ with a $\delta$-approximation to $\nu$. In fact we just need one oracle call to \textsc{approx}.

Let us use (ii) as an oracle and solve (i).   Let $x \in \mathbb{Q}^n$ where $\|x\|\in(1/2, 3/2)$ and a rational $\delta > 0$ be given. Again, observe that $\nu(x)\in [a_1,b_1]$, where $a_1= k_\nu/2$ and $b_1=3K_\nu/2$.  
Suppose that for an integer $i\ge 1$ we showed that $\nu(x)\in [a_i,b_i]$.
Let
\begin{equation}\label{defr}
r=\frac{a_i+b_i}{2},\qquad \varepsilon=\frac{b_i-a_i}{2K_\nu(b_i+a_i)},
\end{equation}
and consider $y=x/r$. Assume first that $y\in S(B_\nu,\varepsilon)$.  Then the right inclusion in \eqref{auxineq3} yields
$\nu(y)\le 1+K_\nu\varepsilon$ and thus
\[
\nu(x)=r\nu(y)\le \frac{a_i+b_i}{2}(1+K_\nu\varepsilon)= \frac{3}{4} b_i+\frac{1}{4}a_i.
\]
In this case we set $a_{i+1}=a_i$ and $b_{i+1}= 3b_i/4+ a_i/4$. Assume now that $y\notin S(B_\nu,-\varepsilon)$.  Then the left inclusion in \eqref{auxineq4} yields
\[
\nu(x)> r(1-K_\nu\varepsilon)= \frac{1}{4}b_i+\frac{3}{4}a_i.
\]
In this case we set  $a_{i+1}= b_i/4+3a_i/4$ and  $b_{i+1}=b_i$.

In either case, we obtain that $\nu(x)\in [a_{i+1},b_{i+1}]$.
Clearly, the sequence of intervals $\{[a_i,b_i] : i\in\mathbb{N} \}$ is nested and their successive lengths decrease by a factor of $3/4$.
\blue{Let $m$ be the smallest integer such that  
\[
m>1+\frac{\log_2 b_1 -\log_2 2\delta}{2-\log_2 3} \quad \text{if}\; 2\delta b^{-1}\le 1,
\] 
and otherwise set $m=1$.  Then}
\[b_m-a_m=
\left(\frac{3}{4}\right)^{m-1}(b_1-a_1)<\blue{\left(\frac{3}{4}\right)^{m-1}b_1}<2\delta.
\]
\blue{Clearly $m$ is polynomial, in fact linear, in}
$\langle K_\nu \rangle+\langle k_\nu \rangle+\langle \delta \rangle$.  
Setting $\omega(x)\coloneqq (a_m+b_m)/2$,  we obtain a $\delta$-approximation of $\nu(x)$.  This shows that we may determine a $\delta$-approximation to $\nu$ with $m$ oracle calls to  \textsc{wmem} in $B_\nu$.
\end{proof}

\begin{corollary}\label{cor:polduality}
A norm is polynomial-time approximable (resp.\ NP-hard to approximate) if and only if its dual norm is polynomial-time approximable (resp.\ NP-hard to approximate).
\end{corollary}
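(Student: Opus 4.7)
My plan is to deduce the corollary by composing three polynomial-time inter-reducibilities already established earlier in the excerpt. First, Theorem~\ref{nuapproxwm} tells me that the approximation problem for $\nu$ is polynomial-time inter-reducible with the \textsc{wmem} problem for $B_\nu$. Second, the corollary immediately following Theorem~\ref{polduality} gives that the \textsc{wmem} problem for $B_\nu$ is polynomial-time inter-reducible with the \textsc{wmem} problem for $B_{\nu^*}$. Third, I apply Theorem~\ref{nuapproxwm} again, this time with $\nu^*$ playing the role of the norm, to obtain that the \textsc{wmem} problem for $B_{\nu^*}$ is polynomial-time inter-reducible with the approximation problem for $\nu^*$. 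Chaining these three equivalences yields polynomial-time inter-reducibility between the approximation problems for $\nu$ and $\nu^*$, from which both directions of the corollary (polynomial-time approximability and NP-hardness of approximation) follow at once.

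The one point deserving a moment's verification is that $\nu^*$ actually satisfies the hypotheses needed to invoke Theorem~\ref{nuapproxwm}: namely, that we know rational constants $k_{\nu^*}, K_{\nu^*} > 0$ with $k_{\nu^*}\|x\| \le \nu^*(x) \le K_{\nu^*}\|x\|$ for all $x \in \mathbb{R}^n$. The bound \eqref{constdualnrm} supplies these explicitly via $k_{\nu^*} = 1/K_\nu$ and $K_{\nu^*} = 1/k_\nu$, which are rational whenever $k_\nu, K_\nu$ are, and whose encoding lengths are bounded in terms of $\langle k_\nu \rangle + \langle K_\nu \rangle$. This is important because it ensures that a polynomial-time oracle algorithm for approximating $\nu^*$ built on top of an oracle for $\nu$ (or vice versa) still runs in time polynomial in the original input size.

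I do not foresee any serious obstacle: once the two principal theorems of Sections~\ref{sec:norm} and~\ref{sec:approx} are available, the corollary is essentially a bookkeeping exercise in composing reductions. The only care required is to confirm that the chain of reductions preserves polynomial running time in the input length, which follows from the polynomial bound on the encoding lengths of the constants attached to $\nu^*$.
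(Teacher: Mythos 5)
Your proposal is correct and is essentially the argument the paper intends (the corollary is left unproved precisely because it follows by chaining Theorem~\ref{nuapproxwm}, the corollary to Theorem~\ref{polduality}, and Theorem~\ref{nuapproxwm} applied to $\nu^*$). Your extra check that $\nu^*$ satisfies the hypotheses of Theorem~\ref{nuapproxwm} via \eqref{constdualnrm}, with $k_{\nu^*}=1/K_\nu$ and $K_{\nu^*}=1/k_\nu$ of polynomially bounded encoding length, is exactly the right point to verify.
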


We end this section with a word about \emph{Mahler volume} \cite{BM87}. For any norm $\nu:\mathbb{R}^n\to [0,\infty)$, let 
$\operatorname{Vol}_n(B_{\nu})$ denote the volume of its unit ball $B_\nu$.  The Mahler volume of $\nu$ is defined as
\[
M(\nu)\coloneqq \operatorname{Vol}_n(B_{\nu})\operatorname{Vol}_n(B_{\nu^*}).
\]
A particularly nice property of the Mahler volume is that it is invariant under \emph{any} invertible linear transformation, regardless of whether it is volume-preserving or not.
 
\begin{corollary}
If the weak membership problem in $B_{\nu}$ is polynomial-time decidable, then $M(\nu)$ is polynomial-time approximable.
\end{corollary}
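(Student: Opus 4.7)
The plan is to chain Theorem~\ref{polduality} with a classical volume-approximation theorem for convex bodies given by a weak membership oracle. First, if \textsc{wmem} for $B_\nu$ is polynomial-time decidable, then Theorem~\ref{polduality} immediately provides a polynomial-time \textsc{wmem} oracle for $B_{\nu^*}$ as well. Moreover, both balls satisfy the centering assumption explicitly and computably: from \eqref{mnuequiv} and \eqref{constdualnrm} one has
\[
B(0, 1/K_\nu) \subseteq B_\nu \subseteq B(0, 1/k_\nu), \qquad B(0, k_\nu) \subseteq B_{\nu^*} \subseteq B(0, K_\nu),
\]
with rational radii. Thus each of $B_\nu$ and $B_{\nu^*}$ is presented as a well-centered convex body with a polynomial-time weak membership oracle, exactly in the input format required by the oracle-based volume algorithms in \cite[Chapter~4]{GLS88}.

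Next I would invoke the Dyer--Frieze--Kannan style randomized polynomial-time approximation scheme for the volume of a convex body specified by such an oracle. For any target relative accuracy $\eta > 0$, this procedure produces, with high probability, approximations $V_1$ and $V_2$ satisfying $(1-\eta)\operatorname{Vol}_n(B_\nu) \le V_1 \le (1+\eta)\operatorname{Vol}_n(B_\nu)$ and the analogous bound for $V_2$ against $\operatorname{Vol}_n(B_{\nu^*})$, running in time polynomial in $n$, $\langle k_\nu \rangle$, $\langle K_\nu \rangle$, and $\langle \eta \rangle$. Setting the target accuracy to $\eta/3$ and returning the product $V_1 V_2$ as an approximation to $M(\nu) = \operatorname{Vol}_n(B_\nu)\operatorname{Vol}_n(B_{\nu^*})$ yields relative error at most $\eta$, via the routine bounds $(1+\eta/3)^2 - 1 < \eta$ and $1 - (1-\eta/3)^2 < \eta$ for $\eta \in (0,1)$.

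The main subtlety is expository rather than technical: ``polynomial-time approximable'' in the statement must be understood in the randomized (FPRAS) sense, since deterministic polynomial-time volume approximation based solely on oracle access is ruled out by the B\'ar\'any--F\"uredi lower bound, even for centrally symmetric bodies. Modulo this caveat, the corollary reads: a polynomial-time \textsc{wmem} algorithm for $B_\nu$ yields a fully polynomial-time randomized approximation scheme for $M(\nu)$, with the duality reduction of Theorem~\ref{polduality} doing the essential work of supplying the second oracle for free.
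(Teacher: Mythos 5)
Your proof is correct and follows essentially the same route as the paper: use the duality reduction to obtain a polynomial-time \textsc{wmem} oracle for $B_{\nu^*}$ from the one for $B_\nu$, then apply the Dyer--Frieze--Kannan oracle-based volume approximation \cite{DFK91} to each ball and multiply the two estimates. Your explicit caveat that the resulting approximation is randomized (an FPRAS rather than a deterministic scheme) is a fair observation that the paper leaves implicit in its citation of \cite{DFK91}.
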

\begin{proof}
If the \textsc{wmem} in $B_\nu$ is polynomial-time decidable, then it follows from \cite{DFK91} that there exist polynomial-time algorithms to approximate $\operatorname{Vol}_n(B_{\nu})$ to any given error $\varepsilon>0$. By Corollary~\ref{cor:polduality}, the \textsc{wmem} in $B_{\nu^*}$ is also polynomial-time 
decidable and thus the same holds for  $\operatorname{Vol}_n(B_{\nu^*})$.
\end{proof}
Mahler volume is more commonly defined for a centrally symmetric compact convex set but as we mentioned before Corollary~\ref{cor:cscc}, this is equal to a 
unit norm ball for an appropriate choice of norm.

\section{Weak membership in dual cones}\label{sec:cones}

In this section, we move our discussion from balls to cones. While every ball is, by definition, a norm ball, a (proper) cone may not be a norm cone, 
i.e., of the form $\{ x\in \mathbb{R}^n : \| Ax \| \le c^\mathsf{T}x \}$ for some norm $\|\cdot\|$ and $A \in \mathbb{R}^{n \times n}$, $c \in \mathbb{R}^n$.  
So the results in this section would not in general follow from the previous sections.

Let $K\subset \mathbb{R}^n$ be a \emph{proper cone} in $\mathbb{R}^n$, i.e., $K$ is a closed convex pointed\footnote{By pointed, we mean that 
$K \cap (-K)=\{0\} $.} cone with non-empty interior. Then its \emph{dual cone},
\[
K^*\coloneqq \{x\in\mathbb{R}^n : y^\mathsf{T} x\ge 0 \;\text{for every}\; y\in K\},
\]
is also a proper cone \cite{Roc}. The main result of this section is an analogue of Theorem~\ref{polduality} for such cones: The weak membership problem for $K^*$ is polynomial-time reducible to the weak membership problem for $K$.

It is well-known that deciding \textsc{mem} for the cone of copositive matrices is NP-hard \cite{MK87}. This result  has recently been  extended \cite{DG14}: \textsc{wmem} in the cone of copositive matrices and \textsc{wmem} in its dual cone, the cone of completely positive matrices, are both NP-hard problems. Our result in this section generalizes this to arbitrary proper cones.

We first recall a well-known result regarding the interior points of $K^*$.
\begin{lemma}\label{interptK'}  Let $K\subseteq \mathbb{R}^n$ be a closed convex cone.  Let $b$ be an interior point of $K^*$, i.e., 
$b+z\in K^*$ for all $z\in B(0,\varepsilon_b)$ for some $\varepsilon_b >0$.  Then
\begin{equation} \label{interptK'1}  
b^\mathsf{T} x\ge \varepsilon_b \|x\|
\end{equation}
for every $x\in K$.
\end{lemma}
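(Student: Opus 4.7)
The plan is to unpack the definitions of the dual cone and of an interior point, and then pick the most adversarial admissible perturbation $z$.

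First I would handle the trivial case $x = 0$, where $b^\mathsf{T} x = 0 = \varepsilon_b \|x\|$ and the inequality holds with equality. So assume $x \in K$ with $x \neq 0$.

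Next, by the hypothesis, for every $z \in \mathbb{R}^n$ with $\|z\| \leq \varepsilon_b$ we have $b + z \in K^*$. Since $x \in K$, the definition of $K^*$ gives $(b+z)^\mathsf{T} x \geq 0$, i.e.\ $b^\mathsf{T} x \geq -z^\mathsf{T} x$. Now I would pick the specific $z \coloneqq -\varepsilon_b\, x / \|x\|$, which lies in $B(0,\varepsilon_b)$ (on its boundary). Substituting gives
\[
b^\mathsf{T} x \;\geq\; -z^\mathsf{T} x \;=\; \frac{\varepsilon_b}{\|x\|}\, x^\mathsf{T} x \;=\; \varepsilon_b \|x\|,
\]
which is the desired inequality.

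I do not anticipate any real obstacle here; the lemma is essentially the observation that an interior point of $K^*$ must pair strictly positively with every nonzero element of $K$, and quantitatively the gap is controlled by the radius $\varepsilon_b$ of any ball around $b$ contained in $K^*$. The only subtlety is making sure the chosen $z$ is actually admissible, but since the paper defines $B(\cdot,\cdot)$ to be the closed Euclidean ball, $\|z\| = \varepsilon_b$ is allowed.
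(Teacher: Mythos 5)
Your proof is correct and is essentially identical to the paper's: the paper also takes $c = b - \varepsilon_b x/\|x\| \in K^*$ and uses $c^\mathsf{T} x \ge 0$. Your explicit treatment of the $x=0$ case and the remark about the closed ball are fine minor additions.
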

\begin{proof}  Let $x\in K\setminus\{0\}$.  Then $c\coloneqq b- \varepsilon_b x/ \|x\|\in K^*$.  Hence $c^\mathsf{T} x\ge 0$, which implies \eqref{interptK'1}.
\end{proof}

We now discuss the notion of \textsc{wmem} in $K$.  Recall that $x\in K\setminus\{0\}$ if and only if $tx\in K$ for each $t>0$.  Hence it \red{suffices} to define \textsc{wmem} in $K$ for \red{$x$
with Euclidean norm $\lVert x \rVert = 1$; but as such an $x$ may be not have rational coordinates, we instead} define a \textsc{wmem} problem for $x\in \mathbb{Q}^n$ \blue{that satisfies} $\frac{1}{2} <\|x\|<1$.
 
Let $a\in\mathbb{Q}^n$ and $b\in\mathbb{Q}^n$ be in the interior of $K$ and $K^*$ respectively. By Lemma~\ref{interptK'},
\begin{equation}\label{defPab}
P_b\coloneqq \{x\in K:b^\mathsf{T} x=1\}, \quad P^*_a=\{y\in K^*: a^\mathsf{T}y=1\}
\end{equation} 
are compact convex sets of dimension $n-1$.  Hence the sets 
\blue{$P_b-(b^\mathsf{T}a)^{-1}a$ and $P_a^*-(a^\mathsf{T}b)^{-1}b$} 
are full-dimensional compact convex sets in the orthogonal complements of $\operatorname{span}(b)$ and $\operatorname{span}(a)$ respectively.
In fact $P_b$ and $P_a^*$ are compact convex sets of maximal dimension in the affine hyperplanes 
\[
H_b\coloneqq \{z\in\mathbb{R}^n: b^\mathsf{T} z=1\}, \quad  H_a\coloneqq \{z\in\mathbb{R}^n: a^\mathsf{T} z=1\}
\]
respectively. We may also view $H_b$ and $H_a$ as the affine hulls of $P_b$ and $P_a^*$ respectively.

\red{As the cones $K$ and $K^*$ are noncompact, these hyperplane sections $P_b$ and $P_a^*$ serve as their compact proxies, allowing us to encode $K$ and $K^*$  (for a Turing machine).
We will assume knowledge of four positive rational numbers $\rho_a' < \rho_a$ and $\rho_b' < \rho_b$ such that
\begin{gather*}
B(0,\rho_a') \cap H_a \subseteq P_a^*-(a^\mathsf{T}b)^{-1}b \subseteq B(0,\rho_a) \cap H_a, \\
B(0,\rho_b') \cap H_b \subseteq P_b-(b^\mathsf{T}a)^{-1}a \subseteq B(0,\rho_b) \cap H_b.
\end{gather*}
$K$ will be encoded as $(n, a,b,\rho_a', \rho_a) \in \mathbb{Q}^{2n+3}$ and $K^*$ as  $(n, a,b,\rho_b',\rho_b) \in \mathbb{Q}^{2n+3}$. So
\[
\langle K \rangle\coloneqq \langle n \rangle+\langle a \rangle+\langle b \rangle + \langle \rho_a' \rangle+\langle \rho_a \rangle, \quad
\langle K^* \rangle\coloneqq \langle n \rangle+\langle a \rangle+\langle b \rangle + \langle \rho_b' \rangle+\langle \rho_b \rangle.
\]}%
While  the numbers $\rho_a, \red{\rho_a'}, \rho_b, \red{\rho_b'}$ do not appear explicitly in our proofs, they are needed implicitly when we invoke the Yudin--Nemirovski Theorem.

Given any $x\ne 0$, observe that $x\in K$ if and only if $x/(b^\mathsf{T} x) \in P_b$.
Thus the membership problem for $K$ is equivalent to the membership problem for $P_b$. We show in the following that this extends,  in an appropriate sense, to weak membership as well.
\begin{lemma}\label{eqWMEMconePb}
Let $x \in \mathbb{Q}^n$ with $1/2 <\|x\|<1$ and $b \in \mathbb{Q}^n$ with $b^\mathsf{T} x> 0$. Then the following problems are polynomial-time inter-reducible:
\begin{enumerate}[\upshape (i)]
\item  Decide weak membership of $x$ in $K$.
\item Decide weak membership of  $y\coloneqq  x/(b^\mathsf{T} x)$ in $P_b$ relative to $H_b$.
\end{enumerate}
\end{lemma}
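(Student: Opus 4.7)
My plan is to exploit the cone property: setting $t\coloneqq b^\mathsf{T} x>0$, the point $y=x/t$ is a positive rescaling of $x$ with $b^\mathsf{T} y=1$, so the exact memberships satisfy $x\in K\iff y\in K\iff y\in P_b$. The task is to propagate this equivalence through the $\delta$-tolerance in both directions with polynomially related tolerances, noting that $t$, $\|b\|$, and the other quantities below are all rationally computable from the input in polynomial time.

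For the reduction from (ii) to (i), given input $(x,\delta)$ for \textsc{wmem} in $K$, I would query the (ii)-oracle with input $(x,\delta/t)$; the rational $\delta/t$ has polynomial bit-size. If the oracle asserts $y\in S_{H_b}(P_b,\delta/t)$, a witness $z\in P_b$ with $\|y-z\|\le\delta/t$ gives $tz\in K$ (by the cone property) and $\|x-tz\|=t\|y-z\|\le\delta$, so $x\in S(K,\delta)$. If it asserts $y\notin S_{H_b}(P_b,-\delta/t)$, a witness $z\in H_b\setminus P_b$ with $\|y-z\|\le\delta/t$ gives $tz\notin K$ (else $z=(tz)/(b^\mathsf{T}(tz))$ would lie in $K\cap H_b=P_b$) and $\|x-tz\|\le\delta$, so $B(x,\delta)\not\subseteq K$ and hence $x\notin S(K,-\delta)$.

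The reverse reduction is harder because the radial projection $w\mapsto w/(b^\mathsf{T} w)$ is nonlinear. I would first establish a local Lipschitz bound: for $w=x+u$ with $\|u\|\le\delta$ and $s\coloneqq b^\mathsf{T} w=t+b^\mathsf{T} u$, a short computation gives
\[
y-\frac{w}{s}=\frac{sx-tw}{ts}=\frac{(b^\mathsf{T} u)x-tu}{ts},
\]
so imposing $\delta\le t/(2\|b\|)$ forces $s\ge t/2$ and hence $ts\ge t^2/2$, which together with $\|x\|<1$ yields $\|y-w/s\|\le C\delta$ for $C\coloneqq 2(\|b\|+t)/t^2$. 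Given input $(x,\delta')$ for (ii), I would then set $\delta\coloneqq \min\bigl(\delta'/C,\ t/(2\|b\|)\bigr)$, a rational of polynomial bit-size, and invoke the (i)-oracle on $(x,\delta)$. A positive response $x\in S(K,\delta)$ with witness $w\in K$ (where $b^\mathsf{T} w\ge t-\|b\|\delta\ge t/2>0$) produces $z\coloneqq w/(b^\mathsf{T} w)\in P_b$ with $\|y-z\|\le C\delta\le\delta'$, establishing $y\in S_{H_b}(P_b,\delta')$; a negative response $x\notin S(K,-\delta)$ with witness $w\notin K$ produces $z\in H_b\setminus P_b$ with the same distance bound, giving $B(y,\delta')\cap H_b\not\subseteq P_b$ and hence $y\notin S_{H_b}(P_b,-\delta')$.

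The principal obstacle is the (i)$\Rightarrow$(ii) direction: verifying the Lipschitz estimate and confirming that $C$ and the derived tolerance $\delta$ have bit-sizes polynomial in those of the input and of the encoding of $K$. Once this estimate is in place, the rest of the argument is routine bookkeeping that transports each witness for $K$ (or its complement near $x$) radially onto $H_b$ to produce a witness for $P_b$ (or its complement in $H_b$ near $y$), and vice versa via radial dilation for the (ii)$\Rightarrow$(i) direction.
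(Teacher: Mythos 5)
Your proposal is correct and follows essentially the same route as the paper: the linear rescaling $z\mapsto tz$ with tolerance $\delta/t$ for solving (i) via the (ii)-oracle, and the Lipschitz estimate for the radial projection $w\mapsto w/(b^\mathsf{T}w)$ (your constant $C=2(\|b\|+t)/t^2$ matches the paper's $4\|b\|/(b^\mathsf{T}x)^2$ up to the bound $t\le\|b\|\|x\|\le\|b\|$) for the reverse direction. The only cosmetic point is that $\|b\|$ may be irrational, so the derived tolerances should be taken as nearby rationals of polynomial bit-size, exactly as the paper does by picking $\delta$ in an open rational interval.
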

\begin{proof}
Suppose that $0< \delta < b^\mathsf{T} x/(2\|b\|)$.   Let $z\in\mathbb{R}^n$ and $\|z\|\le \delta$. 
Clearly,
\[
b^\mathsf{T} (x+z)=b^\mathsf{T} x +b^\mathsf{T}z\ge b^\mathsf{T} x - \|b\|\|z\|\ge \frac{1}{2}b^\mathsf{T} x >0.
\]
In the following, we let $y\coloneqq  x/(b^\mathsf{T} x)$ and $u\coloneqq (x+z)/\bigl(b^\mathsf{T} (x+z)\bigr)\in H_b$.

Suppose that we can solve (i), i.e., for any rational $\delta>0$ and  $x\in \mathbb{Q}^n$ with $1/2<\|x\|<1$  we can decide whether $x\in S(K,\delta)$ or $x\notin S(K,-\delta)$.  Let $\varepsilon > 0$ be rational and choose $\delta$ rational so that
\[
\frac{(b^\mathsf{T} x)^2}{8\|b\|}\varepsilon < \delta < \frac{(b^\mathsf{T} x)^2}{4\|b\|}\varepsilon.
\]

Consider first the case $x\notin S(K,-\delta)$. There exists $z\in \mathbb{R}^n$, $\|z\|\le \delta$ such that $x+z\notin K$. So $u\notin P_b$.  Since
\begin{align*}
\red{y-u}&=\frac{1}{(b^\mathsf{T} x)(b^\mathsf{T} (x+z))}[(b^\mathsf{T} (x+z))x - (b^\mathsf{T} x)(x+z)]\\
&=\frac{1}{(b^\mathsf{T} x)(b^\mathsf{T} (x+z))}[(b^\mathsf{T} z)x - (b^\mathsf{T} x)z],
\end{align*}
we obtain
\[
\|y-u\|\le \frac{2}{(b^\mathsf{T} x)^2}(2\|b\|\|x\|\|z\|)\le  \frac{4\|b\|\delta}{(b^\mathsf{T} x)^2} < \varepsilon.
\]
Hence $y\notin S_{H_b}(P_b,-\varepsilon)$.

Consider now the case $x\in S(K,\delta)$. There exists $z\in \mathbb{R}^n$, $\|z\|\le \delta$ such that $x+z\in K$.
The same line of argument as above yields that  $\red{y}\in \blue{S_{H_b}(P_b,\varepsilon)}$. Together the two cases show that if we can decide \textsc{wmem} in $K$ with inputs $x$, $\delta$, then we can decide \textsc{wmem} in $P_b$ relative to $H_b$ with inputs $y$, $\varepsilon$.

Suppose we can solve (ii), i.e.,  for any rational $\varepsilon > 0$ and $x\in \mathbb{Q}^n$ with $1/2 <\|x\|<1$, $b^\mathsf{T} x>0$, we can decide whether $y\in \blue{S_{H_b}(P_b,\varepsilon)}$ or $y\notin S_{H_b}(P_b,-\varepsilon)$.  

Let $x\in \mathbb{Q}^n$ with $1/2 <\|x\|<1$. We start by excluding the trivial case when $b^\mathsf{T} x\le 0$.  By Lemma~\ref{interptK'},  $x\notin K$ and thus $x\notin S(K,-\delta)$ for any $\delta >0$. So we may assume henceforth that $b^\mathsf{T} x> 0$. Let $\delta > 0$ be rational  and set $\varepsilon\coloneqq \delta/(b^\mathsf{T} x)$.

Consider first the case $y\notin S_{H_b}(P_b,-\varepsilon)$.  There exists $v\in H_b\setminus{P_b}$ such that $\|v-y\|\le \varepsilon$.  Let $z=(b^\mathsf{T} x)(v-y)$.  So
\[
\|z\|\le (b^\mathsf{T} x)\varepsilon =\delta.
\]
Hence $(b^\mathsf{T} x)v=x+z\notin K$ and so $x\notin S(K,-\delta)$.  

Consider now the case $y\in \blue{S_{H_b}(P_b,\varepsilon)} $. The same line of argument as above yields  that  $x\in S(K,\delta)$.  Together the two cases show that if we can decide \textsc{wmem} in $P_b$ relative to $H_b$ with inputs $y$, $\varepsilon$, then we can decide \textsc{wmem} in $K$ with inputs $x$, $\delta$.
\end{proof}

Lemma~\ref{eqWMEMconePb} may be viewed as a compactification result: We transform a problem involving a noncompact object $K$ to a problem involving a compact object $P_b$. The motivation is so that we may apply the Yudin--Nemirovski Theorem later. 

\begin{theorem}\label{WMEMcone}
Let $K\subset \mathbb{R}^n$ be a proper cone and $K^*$ be its dual.  Let $a \in \mathbb{Q}^n$ and $b\in\mathbb{Q}^n$ be  interior points of $K$ and $K^*$ respectively that satisfy $b^\mathsf{T} a=1$.  Then the \textsc{wmem} problem for $K^*$ is polynomial-time reducible to the \textsc{wmem} problem for $K$.
\end{theorem}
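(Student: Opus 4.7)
The strategy parallels the proof of Theorem~\ref{polduality}: I would establish the chain
\[
\text{\textsc{wmem}}(K) \;\Rightarrow\; \text{\textsc{wmem}}(P_b) \;\Rightarrow\; \text{\textsc{wval}}(P_b) \;\Rightarrow\; \text{\textsc{wmem}}(P_a^*) \;\Rightarrow\; \text{\textsc{wmem}}(K^*),
\]
where all \textsc{wmem} and \textsc{wval} statements for the slices $P_b$ and $P_a^*$ are interpreted relative to their respective affine hulls $H_b$ and $H_a$. The outer two reductions are direct applications of Lemma~\ref{eqWMEMconePb} (to $K$ with the interior point $b\in K^*$, and to $K^*$ with $a\in K$). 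The second reduction is the Yudin--Nemirovski Theorem applied inside the $(n-1)$-dimensional affine subspace $H_b$; the centering data needed is supplied by the inclusions $B(0,\rho_b')\cap H_b\subseteq P_b - a\subseteq B(0,\rho_b)\cap H_b$, after translation by $-a$.

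The new content is the middle reduction \textsc{wval}$(P_b)\Rightarrow\text{\textsc{wmem}}(P_a^*)$, which is the cone analogue of Lemma~\ref{wmemwval}. The key identity comes from cone duality: for $y\in H_a$, one has $y\in P_a^*$ if and only if $y^\mathsf{T} x\ge 0$ for every $x\in P_b$, i.e.\ $\max_{x\in P_b}(-y)^\mathsf{T} x \le 0$. Equivalently, under the substitution $w=b-y$, this is the polar-duality identity
\[
P_a^* - b \;=\; -(P_b - a)^\circ \cap \{w\in\mathbb{R}^n : a^\mathsf{T} w = 0\}.
\]
Given input $(y,\delta)$ to \textsc{wmem}$(P_a^*)$, I would invoke \textsc{wval}$(P_b)$ with cost vector $c = -y$, threshold $\gamma = 0$, and a tolerance $\varepsilon$ chosen as a polynomial-size rational multiple of $\delta$ with constants built from $\rho_a,\rho_a',\rho_b,\rho_b',\|a\|,\|b\|$. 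A quantitative sandwich lemma in the spirit of Lemma~\ref{auxineq} --- supplying two-sided inclusions for $S_{H_b}(P_b,\pm\varepsilon) - a$ and $S_{H_a}(P_a^*,\pm\delta) - b$ in terms of dilates of $P_b - a$ and $P_a^* - b$ respectively --- then converts the two possible outputs of \textsc{wval} into the two assertions required of \textsc{wmem}$(P_a^*)$.

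The main obstacle will be the bookkeeping in that sandwich lemma. Since $P_b - a$ and $P_a^* - b$ are not centrally symmetric, the outer and inner Euclidean radii enter asymmetrically; the appropriate sublinear functional is the Minkowski gauge of $P_b - a$, not a norm; and the inclusions must be tracked in two distinct hyperplanes linked by the polar-duality identity above. A scaling argument analogous to the reduction to ``$k_\nu\ge 2$'' in the proof of Theorem~\ref{polduality} disposes of any smallness issue, and verifying that the tolerance $\varepsilon$ passed to the \textsc{wval} oracle remains polynomial-size in $\langle K\rangle + \langle K^*\rangle + \langle\delta\rangle$ is a direct, if tedious, adaptation of the norm-ball computation of Section~\ref{sec:norm}.
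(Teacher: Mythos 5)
Your proposal matches the paper's proof essentially step for step: the same chain through the hyperplane sections, with the outer reductions given by Lemma~\ref{eqWMEMconePb}, the reduction of \textsc{wval} for $P_b$ (equivalently for $K_b=P_b-a$ viewed in $\mathbb{R}^{n-1}$) to \textsc{wmem} for $P_b$ by Yudin--Nemirovski, and the middle step driven by the duality $y\in P_a^*\Leftrightarrow y^\mathsf{T}x\ge 0$ for all $x\in P_b$, invoked as a \textsc{wval} query with cost $-y$ and threshold $0$. The quantitative ``sandwich'' you leave as bookkeeping is carried out in the paper by perturbing the query point along the interior direction $b$ of $P_a^*$ (i.e.\ the dilation about $b$ that you describe), so the two arguments coincide.
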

\begin{proof} 
Note that such a pair of $a$ and $b$ must exist for any proper cone. Let $a,b\in \mathbb{Q}^n$ be interior points contained in balls of radii $\varepsilon_a$, $\varepsilon_b >0$ within $K$, $K^*$
respectively. So $b^\mathsf{T} a > 0$.  If $b^\mathsf{T} a=1$, we are done. Otherwise set $a' = a/(b^\mathsf{T} a)\in \mathbb{Q}^n$.  Then $b^\mathsf{T}a'=1$ and $a'$ is contained in a ball of radius
$\varepsilon_{a'}=\varepsilon_a/(b^\mathsf{T}a)$ within $K^*$.

By Lemma~\ref{eqWMEMconePb}, we just need to show that the \textsc{wmem} problem for $P_a^*$ relative to $H_a$ is polynomial-time reducible to the \textsc{wmem} problem for $P_b$ relative to $H_b$. Since $b^\mathsf{T} a=1$, $H_b -a =b^{\perp}$, the orthogonal complement of $b$, and can be identified with $\mathbb{R}^{n-1}$ by an orthogonal change of coordinates.  We set $K_b\coloneqq  P_b-a$, a compact closed set in $\mathbb{R}^{n-1}$ containing the origin $0\in \mathbb{R}^{n-1}$.  Moreover $B(0,\varepsilon_a)\subset K_b$, where $B(0,\varepsilon_a)$ here is an $(n-1)$-dimensional ball in $\mathbb{R}^{n-1}$. It is enough to show that the \textsc{wmem} problem for $P_a^*$ relative to $H_a$ is polynomial-time reducible to the \textsc{wmem} problem\footnote{When we refer to the \textsc{wmem} or \textsc{wval} problem for $K_b$, we mean its \textsc{wmem} or \textsc{wval} problem as a subset of $\mathbb{R}^{n-1}$.} for $K_b$. We would also need to invoke the fact that the \textsc{wval} problem for $K_b$ is polynomial-time reducible to the \textsc{wmem} problem for $K_b$ by the Yudin--Nemirovski Theorem. \red{The following sequence of polynomial-time reductions outlines the idea of our proof:
\begin{multline*}
\text{\textsc{wmem} for}\; K\Rightarrow \text{\textsc{wmem} for}\; P_b \;\text{relative to}\; H_b \\
\Rightarrow \text{\textsc{wmem} for}\; P_a^* \;\text{relative to}\; H_a \Rightarrow \text{\textsc{wmem} for}\; K^*.
\end{multline*}}%
Let $c\in \mathbb{Q}^n\cap H_a$.  Given a rational $\delta>0$ we need to decide whether $c\notin S_{H_a}(P_a^*,-\delta)$ or $c\in S_{H_a}(P_a^*,\delta)$.  Let $\varepsilon>0$ be rational with
\begin{equation}\label{condeps}
\varepsilon <\min\left\{\frac{1}{4(1+\|c\|)}, \frac{\delta}{4(1+\|c\|)(\|b-c\|)} \right\},
\end{equation}
where $\delta/0 \coloneqq \infty$ if $b = c$.  It follows from \eqref{condeps} that
\begin{equation}\label{condeps1}
\tau\coloneqq (1+\|c\|)\varepsilon\le \frac{1}{4},\qquad \left\|c-\frac{c+\tau b}{1+\tau}\right\| \le \delta,\qquad \left\|c - \frac{c-2\tau b}{(1-2\tau}\right\|\le \delta.
\end{equation}

Observe that $c$ defines a linear functional $b^{\perp} \to \mathbb{R}$, $x\mapsto c^\mathsf{T} x$.  Consider the \textsc{wval} problem for $K_b$ with $\gamma=-c^\mathsf{T} a$:
Either $c^\mathsf{T}x\ge -c^\mathsf{T} a -\varepsilon$ for all $x\in \red{S_{H_b}(K_b,-\varepsilon)}$ or  $c^\mathsf{T}x\le -c^\mathsf{T}a +\varepsilon$ for some $x\in \red{S_{H_b}(K_b,-\varepsilon)}$.
We will show that in the first case $c\in \blue{S_{H_a}(P_a^*,\delta)}$ and in the second case $c\not\in S_{H_a}(P_a^*,-\delta)$ for a corresponding $\delta >0$.

Consider first the case $c^\mathsf{T}x\ge -c^\mathsf{T} a -\varepsilon$ for all $x\in \red{S_{H_b}(K_b,-\varepsilon)}$, or, equivalently, $c^\mathsf{T}y \ge -\varepsilon$ for all $y = x + a \in S_{H_b}(P_b,-\varepsilon)$. 
We claim that $c^\mathsf{T}y\ge -(1+\|c\|)\varepsilon$ for all $y\in P_b$. This holds for $y \in  \red{S_{H_b}(P_b,-\varepsilon)}$ since  $c^\mathsf{T}y \ge -\varepsilon \ge -(1+\|c\|)\varepsilon$. For 
$y\in P_b\setminus \red{S_{H_b}(P_b,-\varepsilon)}$, there exists $x\in \red{S_{H_b}(P_b,-\varepsilon)}$ such that $\|y-x\|\le \varepsilon$.
Thus $c^\mathsf{T} y=c^\mathsf{T}x+ c^\mathsf{T}(y-x)\ge -\varepsilon -\|c\|\|y-x\|=-(1+\|c\|)\varepsilon$.
Then for any $ y\in P_b$,
\[
\frac{1}{1+\tau}(c+\tau b)^\mathsf{T} y\ge 0 \quad \Rightarrow\quad \frac{1}{1+\tau}(c+\tau b)\in P_a^*.
\] 
By the middle inequality in \eqref{condeps1}, we obtain $c\in \blue{S_{H_a}(P_a^*,\delta)}$.

Consider now the case $c^\mathsf{T}x\le -c^\mathsf{T}a +\varepsilon$ for some $x\in \blue{S_{H_b}(K_b,\varepsilon)}$, or, equivalently, $c^\mathsf{T} y \le \varepsilon$ for some $y = x + a\in \blue{S_{H_b}(P_b,\varepsilon)}$.  Hence there exists $z\in P_b$ such that $\|z - y\|\le \varepsilon$ and so $c^\mathsf{T} z=c^\mathsf{T} y +c^\mathsf{T}(z- y)\le (1+\|c\|)\varepsilon=\tau<1/4$ by the left inequality in \eqref{condeps1}.  Then
\[\frac{1}{1-2 \tau}(c-2\tau b)^\mathsf{T} z\le - \tau \quad \Rightarrow \quad \frac{1}{1-2 \tau}(c-2\tau b) \not\in P_a^*.\] 
By the right inequality in \eqref{condeps1}, we obtain $c\not\in S_{H_a}(P_a^*,-\delta)$.
\end{proof}

\section{Approximation of Fenchel duals}\label{sec:fenchel}

Let $C\subseteq \mathbb{R}^n$ and $f: C \to \mathbb{R}$. Since the epigraph of $f$,
$\operatorname{epi}(f)=\{(x,t)\in C \times \mathbb{R} : f(x)\le t\}$, is in general noncompact, we introduce the following variant that preserves all essential features of the epigraph but has the added advantage of facilitating complexity theoretic discussions. For any $\alpha\in\mathbb{R}$, we let
\[
\operatorname{epi}_\alpha(f)=\{(x,t)\in C \times (-\infty, \alpha] :  f(x)\le t\}
\]
and call this the \emph{$\alpha$-epigraph} of $f$.
Clearly $f$ is a convex function if and only if $\operatorname{epi}_\alpha(f)$ is a convex set for all $\alpha \in \mathbb{R}$.

\begin{definition}\label{defncompfepi}  Let $C\subseteq \mathbb{R}^n$ be a bounded set with nonempty interior.  Let $f: C\to\mathbb{R}$ be a bounded function.  We define the following \emph{approximation problems} (\textsc{approx}).
\begin{enumerate}[\upshape (i)]
\item Approximation problem for $f$: Given any $x\in \mathbb{Q}^n\cap C$ and any rational $\varepsilon >0$, find an $\omega(x)$ 
such that \blue{$ f(x)-\varepsilon<\omega(x)<f(x)+\varepsilon$}.
\item  Approximation problem for $\mu\coloneqq \inf_{x\in C} f(x)$: Given any rational $\varepsilon>0$, find $\mu(\varepsilon)\in\mathbb{Q}$ such $\mu-\varepsilon<\mu(\varepsilon)<\mu+\varepsilon$. 
\end{enumerate}
\end{definition}
(i) is of course a generalization of Definition~\ref{def:approx} from norms to a more general function. We will show that (i) and (ii) are polynomial-time inter-reducible. For this purpose, we will need a useful corollary \cite[Corollary~4.3.12]{GLS88} of the Yudin--Nemirovski Theorem (cf.\ Theorem~\ref{thm:YN})
with the \textsc{wopt} problem in place of the \textsc{wval} problem.

\begin{corollary}[Yudin--Nemirovski]\label{cor:YN}
Let $C \subseteq \mathbb{R}^n$ be a compact convex set with nonempty interior for which we have knowledge of $a \in \mathbb{Q}^n$ and $0< r \le R \in \mathbb{Q}$ such that $B(a,r) \subseteq C \subseteq B(\blue{a},R)$. Then the \textsc{wopt} problem for $C$ is polynomial-time reducible to the \textsc{wmem} problem for $C$.
\end{corollary}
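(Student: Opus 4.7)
The plan is to derive \textsc{wopt} from \textsc{wmem} in two stages: first reduce to \textsc{wval} via Theorem~\ref{thm:YN}, then use bisection followed by a witness-recovery step. At a high level, the chain of reductions I have in mind is
\[
\textsc{wmem}\;\Longrightarrow\;\textsc{wval}\;\Longrightarrow\;\text{approximation of}\;\gamma^*\coloneqq\max_{x\in C}c^\mathsf{T} x\;\Longrightarrow\;\textsc{wopt}.
\]

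For the value-approximation step, I would drive a binary search on the scalar $\gamma$ using the \textsc{wval} oracle. Because $C\subseteq B(a,R)$, the true optimum $\gamma^*$ lies in $[c^\mathsf{T} a-R\lVert c\rVert,\,c^\mathsf{T} a+R\lVert c\rVert]$, an interval of length polynomial in the input size. A \textsc{wval} query at the midpoint $\gamma_k$ of the current interval with tolerance $\varepsilon/6$ returns either the assertion $c^\mathsf{T} x\le\gamma_k+\varepsilon/6$ for all $x\in S(C,-\varepsilon/6)$, or the existence of a near-feasible point of value at least $\gamma_k-\varepsilon/6$; either branch suffices to halve the search interval. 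After $O(\log(R\lVert c\rVert/\varepsilon))$ iterations one obtains a rational $\gamma$ with $|\gamma-\gamma^*|<\varepsilon/3$.

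For witness recovery, the goal is to produce $y\in S(C,\varepsilon)\cap\mathbb{Q}^n$ satisfying $c^\mathsf{T} y\ge\gamma-\varepsilon/3$. A natural first attempt is coordinate-by-coordinate bisection: at stage $i$, use \textsc{wval} on appropriately thickened slabs of $C$ of the form $\{x:|x_j-\beta_j|\le\tau\;\text{for}\;j<i\}$ to commit to a coordinate value $\beta_i$ while preserving the existence of a near-optimal feasible point. The slab thickness $\tau$ must be controlled in terms of the inner-ball radius $r$ so that each restricted problem continues to satisfy the centering hypothesis required by Theorem~\ref{thm:YN}, with $r$ and $R$ degrading only polynomially across the $n$ stages.

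The main obstacle is precisely this witness-recovery step. The danger is that intersecting $C$ with the near-optimal half-space $\{c^\mathsf{T} x\ge\gamma-\varepsilon/6\}$ can produce a set that is exponentially thin near the optimum, so that a naive recursion drives the inner-ball radius to zero and the \textsc{wval} reductions become super-polynomial. The standard remedy, and the one underlying the proof in \cite[Chapter~4]{GLS88}, is the \emph{shallow-cut} ellipsoid method: maintain an ellipsoid $E_k\supseteq\{x\in C:c^\mathsf{T} x\ge\gamma\}$, query its center via \textsc{wmem}, and cut $E_k$ either by the objective hyperplane (if the center is feasible) or by a separating hyperplane extracted from an internal wmem-driven subroutine (if not). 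The volume-halving property of the ellipsoid updates then guarantees termination in polynomially many iterations, yielding a witness point that meets the \textsc{wopt} specification.
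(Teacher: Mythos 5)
The paper does not actually prove this corollary; it is quoted verbatim from \cite[Corollary~4.3.12]{GLS88}, so there is no in-paper argument to compare against. Your skeleton --- a binary search on the objective value followed by production of a near-optimal point --- is the right shape, but as written there is a genuine gap at exactly the place you flag: witness recovery. Binary search driven by \textsc{wval} only certifies a scalar $\gamma$ close to $\gamma^*$; it never hands you a point. Your coordinate-by-coordinate bisection then has to run \textsc{wval}-type queries on sets of the form $C\cap\{c^\mathsf{T}x\ge\gamma-\varepsilon/6\}$ (further sliced by slabs), and these sets need not contain any ball of radius bounded below by an inverse-polynomial quantity --- they can degenerate to a lower-dimensional face --- so the centering hypothesis needed to invoke Theorem~\ref{thm:YN} on them is simply unavailable, as you yourself observe. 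Deferring to ``the shallow-cut ellipsoid method'' does not close the gap: that method is the engine \emph{inside} the proof of the Yudin--Nemirovski theorem, so appealing to it here amounts to re-proving the theorem rather than deducing the corollary from it, and in any case you do not carry the argument out.

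The clean deduction --- and the one underlying \cite[Corollary~4.3.12]{GLS88} --- uses the \emph{stronger} form of Yudin--Nemirovski that the paper explicitly mentions right after Theorem~\ref{thm:YN}: \textsc{wviol}, not merely \textsc{wval}, is polynomial-time reducible to \textsc{wmem}. The \textsc{wviol} oracle, in its second alternative, \emph{returns} a point $y\in S(C,\varepsilon)$ with $c^\mathsf{T}y\ge\gamma-\varepsilon$, so witness recovery is free. Run your binary search with \textsc{wviol} at tolerance $\varepsilon/3$ over the initial interval $[c^\mathsf{T}a-R\lVert c\rVert,\,c^\mathsf{T}a+R\lVert c\rVert]$, always storing the most recent witness (initialize with $y=a\in C$). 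After polynomially many iterations you hold $\gamma_{\mathrm{lo}}<\gamma_{\mathrm{hi}}$ with $\gamma_{\mathrm{hi}}-\gamma_{\mathrm{lo}}<\varepsilon/3$, a point $y\in S(C,\varepsilon/3)$ with $c^\mathsf{T}y\ge\gamma_{\mathrm{lo}}-\varepsilon/3$, and the assertion $c^\mathsf{T}x\le\gamma_{\mathrm{hi}}+\varepsilon/3$ for all $x\in S(C,-\varepsilon/3)$; chaining these gives $c^\mathsf{T}x\le c^\mathsf{T}y+\varepsilon$ for all $x\in S(C,-\varepsilon)\subseteq S(C,-\varepsilon/3)$, which is precisely the \textsc{wopt} specification (the alternative $S(C,\varepsilon)=\varnothing$ never arises since $B(a,r)\subseteq C$). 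If you insist on working only with the \textsc{wval} form, you must supply a genuinely nontrivial witness-extraction argument; your sketch does not contain one.
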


We will rely on this to show that for a convex function $f : C \to \mathbb{R}$, the approximation problem for $\inf_{x \in C} f(x)$ is polynomial-time reducible to the  approximation problem for $f$.

\begin{lemma}\label{mincomput}
Let $C\subseteq \mathbb{R}^n$ be a compact convex set with nonempty interior where \textsc{mem}  in $C$ can be checked in polynomial time. Let $f:C \to \mathbb{R}$ be a continuous convex functions with $\lvert f(x) \rvert \le \alpha$ for some rational $\alpha > 0$.
Suppose that there exists a rational $\delta>0$ such that  
\begin{equation}\label{mindeltacond}
\mu\coloneqq \min_{x\in C} f(x)=\min_{x\in S(C,-\delta)} f(x) .
\end{equation}
Then the approximation problem for $\mu$ is polynomial-time reducible to the approximation problem for $f$.
\end{lemma}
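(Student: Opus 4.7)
The plan is to realize $\mu$ as the optimum of a linear functional over the (truncated) epigraph of $f$, apply the Yudin--Nemirovski Corollary~\ref{cor:YN} to obtain polynomial-time weak optimization over this set, and reduce the required weak membership to a single call each to the approximation oracle for $f$ and the \textsc{mem} oracle for $C$. Hypothesis~\eqref{mindeltacond} is precisely what makes the final quantitative estimate polynomial.

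First I would introduce the set
\[
E \coloneqq \{(x,t)\in C\times\mathbb{R} : f(x)\le t\le \alpha+1\},
\]
which is compact and convex (convexity of $E$ follows from convexity of $f$). If $B(a,r_0)\subseteq C$, then the ball $B\bigl((a,\alpha+\tfrac{1}{2}),\min(r_0,\tfrac{1}{2})\bigr)$ sits inside $E$, and $E$ is contained in an explicit Euclidean ball, so the centering assumption of Corollary~\ref{cor:YN} holds with rational constants computable from the input. Next I would reduce \textsc{wmem} for $E$ at precision $\varepsilon$ to one \textsc{mem} query on $C$ together with one $(\varepsilon/3)$-approximation of $f$: if $x\notin C$ or $t$ falls outside a suitable interval around $[-\alpha,\alpha+1]$, one asserts $(x,t)\notin S(E,-\varepsilon)$ directly; otherwise one computes $\omega(x)$ with $|f(x)-\omega(x)|<\varepsilon/3$ and compares $\omega(x)$ with $t$ to decide between asserting $(x,t)\in S(E,\varepsilon)$ and $(x,t)\notin S(E,-\varepsilon)$.

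Corollary~\ref{cor:YN} then supplies a polynomial-time \textsc{wopt} algorithm for $E$. Applied to the linear functional $(x,t)\mapsto -t$ at precision $\varepsilon'$, it returns $(y,s) \in S(E,\varepsilon')$ with $s \le \inf\{t : (x,t)\in S(E,-\varepsilon')\}+\varepsilon'$. One direction of the approximation estimate is immediate: any point of $S(E,\varepsilon')$ is within $\varepsilon'$ of some $(y',s')\in E$, so $s \ge s'-\varepsilon' \ge f(y') - \varepsilon' \ge \mu - \varepsilon'$.

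The main obstacle, and the only place where hypothesis~\eqref{mindeltacond} is genuinely used, is the matching inequality $s\le \mu + O(\varepsilon')$. For this I would take $x^*\in S(C,-\delta)$ attaining $\mu$ and invoke the standard fact that a convex function bounded by $\alpha$ on $C$ is Lipschitz on $S(C,-\delta)$ with a constant $L$ of order $\alpha/\delta$. Then for $\varepsilon'\le \delta$ the lifted point $(x^*,\mu+(L+1)\varepsilon')$ lies in $S(E,-\varepsilon')$, whence $\inf\{t:(x,t)\in S(E,-\varepsilon')\}\le \mu+(L+1)\varepsilon'$ and consequently $|s-\mu|\le (L+2)\varepsilon'$. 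Since $L$ is polynomial in $\langle\alpha\rangle+\langle\delta\rangle$, the choice $\varepsilon'=\varepsilon/(L+3)$ produces the desired $\varepsilon$-approximation $\mu(\varepsilon)\coloneqq s$ of $\mu$ using only polynomially many oracle calls, completing the reduction.
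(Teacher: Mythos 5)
Your proposal is correct and follows essentially the same route as the paper: truncate the epigraph, reduce its weak membership to one \textsc{mem} call on $C$ plus one approximation of $f$, and run Yudin--Nemirovski weak optimization on the functional $(x,t)\mapsto -t$. If anything, your Lipschitz lift of $(x^*,\mu)$ to a point genuinely inside $S(E,-\varepsilon')$ makes rigorous the one step the paper glosses over, namely its assertion that the minimizer already lies in the shrunken epigraph.
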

Note that we require knowledge of the values of both $\alpha$ and $\delta$, not just of their existence.
\blue{We need the condition \eqref{mindeltacond} to ensure that no minimizer of $f$ lies on the boundary of $C$ and that any minimizer is at least distance $\delta$ away from the boundary.}%
\begin{proof}[of Lemma~\ref{mincomput}]
We will show that \textsc{wopt} in $\operatorname{epi}_{2\alpha}(f)$ yields a solution to \textsc{approx} for $\mu$. The result then follows from two polynomial-time reductions: \textsc{wopt} in $\operatorname{epi}_{2\alpha}(f)$ can be reduced to \textsc{wmem} in $\operatorname{epi}_{2\alpha}(f)$, \textsc{wmem} in $\operatorname{epi}_{2\alpha}(f)$ can be reduced to \textsc{approx} for $f$.

As $f$ is a continuous convex function and $C$ is compact with nonempty interior, $C' \coloneqq \operatorname{epi}_{2\alpha}(f)$ is a compact convex set with interior in $\mathbb{R}^{n+1}$.
We claim that the \textsc{wmem} in $C'$ is polynomial-time reducible to the approximation problem for $f$.  Let $\varepsilon\in \mathbb{Q}$ with $0 < \varepsilon < \alpha$  and $(x,t)\in\mathbb{Q}^{n+1}$. If $x\notin C$  or $t>2\alpha$, then $(x,t)\notin C'$ and so $(x,t)\notin S(C',-\varepsilon)$. Now suppose $x\in C$ and $t\le 2\alpha$. An oracle call to the approximation problem for $f$ gives us $\omega(x)$ with $\omega(x)-\varepsilon<f(x)<\omega(x)+\varepsilon$. If $t\ge \omega(x)$, then as $(x,t)+(0,\varepsilon)\in C'$, it follows 
that $(x,t)\in S(C',\varepsilon)$.  If $t<\omega(x)$, then as $(x,t)-(0,\varepsilon)\notin C'$, it follows that $(x,t)\notin S(C',-\varepsilon)$.

By Corollary~\ref{cor:YN}, \textsc{wopt} in $C'$ is polynomial-time reducible to \textsc{wmem} in $C'$.  Therefore given $\varepsilon \in \mathbb{Q}$ with $0 < \varepsilon < \min(\alpha,\delta)$ and $\gamma=(0,\dots,0,-1)\in \mathbb{Z}^{n+1}$, by an oracle call to \textsc{wmem} in $C'$,
we may find $(y,s)\in S(C',\varepsilon)$ such that 
\[
\gamma^\mathsf{T}(x,t)=-t\le \gamma^\mathsf{T}(y,s)+\varepsilon=-s+\varepsilon
\]
for all $(x,t)\in S(C',-\varepsilon)$. We claim that $s = \mu(\varepsilon)$, the required approximation to $\mu$.
Since \red{$\varepsilon< \delta$}, it follows that $S(C',-\varepsilon)\supseteq S(C',-\delta)$.  The assumption \eqref{mindeltacond} ensures that $(x^\star, \mu)\in S(C,-\delta)$ where $f(x^\star) = \mu$.
Hence we deduce that $s\le \mu+\varepsilon$, i.e., $\mu\ge s-\varepsilon$.   As $(y,s)\in S(C',\varepsilon)$, it follow that there exists $(x',t')\in C'$ such that 
$t'\ge f(x')$ and $\lvert t'-s\rvert \le \varepsilon$. So $s\ge t'-\varepsilon\ge \mu-\varepsilon$. Thus
$\mu-\varepsilon \le s \le \mu+\varepsilon$, but starting with $2\varepsilon$ in place of $\varepsilon$ allows us to replace `$\le$' by `$<$' as required by Definition~\ref{defncompfepi}\red{(ii)}.
\end{proof}
 
We now turn to the computational complexity of \emph{Fenchel dual} \cite{AM, Roc}. Our results here require that $f$ be defined on all of $\mathbb{R}^n$.  Recall that for a function $f : \mathbb{R}^n \to \mathbb{R}$, its Fenchel dual is defined to be the function $f^*: \mathbb{R}^n\to (-\infty,\infty]$,
\[
f^*(y)\coloneqq \sup_{x\in \mathbb{R}^n} \blue{\bigl( y^\mathsf{T} x -f (x)\bigr)}.
\]
The Fenchel dual is also known as the \emph{Fenchel conjugate} and the map $f \mapsto f^*$ is sometimes  called the \emph{Legendre transform}. It is well-known that $f^*$ is always a convex function, being the pointwise supremum of a family of affine functions  $y \mapsto y^\mathsf{T}x -f(x)$.  It is  also well-known that $f$ is \red{a lower semicontinuous proper convex function if and only if} $f^{**}=f$.

Suppose that given any inputs $x \in \mathbb{Q}^n$ and $0 < \varepsilon \in \mathbb{Q}$, we can compute $f(x)$ to within precision $\varepsilon$  in polynomial-time. What can be we say about the complexity of computing $f^*(y)$ for an input $y \in \mathbb{Q}^n$ to a certain precision?  We will see that if $f$ is not convex, then the computation of $f^*$ can be NP-hard at least for some $y$. However, when $f$ is convex and satisfies certain growth conditions, computing $f^*$ is a problem that is polynomial-time reducible to computing $f$. Furthermore $f^*$ would satisfy the same growth conditions so that computing $f$ and computing $f^*$ are in fact polynomial-time inter-reducible.

Let $g : \mathbb{R}^n \times \mathbb{R}^n \times \mathbb{R}^n \to \mathbb{R}$, $(x,y,z) \mapsto \sum_{i,j,k=1}^n a_{ijk} x_i y_j z_k$ be a multilinear function. Let $D=\{(x,y,z) \in \mathbb{R}^{3n}: \|x\|\le 1,\; \|y \| \le 1, \; \|z \| \le 1\}$. We define a nonconvex function $f$ as follows:  For $(x,y,z)\in D$, $f(x,y,z)\coloneqq -g(x,y,z)$.  For $(x,y,z)\notin D$, let $t=1/\max(\|x\|,\|y\|,\|z\|)$ and set $f(x,y,z)\coloneqq -g(tx,ty,tz)$. It is trivial to compute $f$ for any $(x,y,z) \in \mathbb{R}^{3n}$ but $f^*(0)=\max_{(x,y,z)\in D} g(x,y,z)$ is NP-hard to approximate in general \cite[Theorem~10.2]{HL13}.

In what follows let $f : \mathbb{R}^n \to \mathbb{R}$ be a  continuous convex function. We will assume that $f$ satisfies the following growth condition:
\begin{equation}\label{growthcondf1}
k_f\|x\|^s\le f(x)\le K_f\|x\|^t \quad \text{whenever} \quad \|x\|\ge r.
\end{equation} 
for some constants $0<k_f \le K_f$,  $1<s \le t$, and $r > 0$ depending on $f$.
We now show that $f^*$ must satisfy similar growth conditions
\begin{equation}\label{growthcondf*}
k_{f^*}\|y\|^{s'}\le f^*(y)\le K_{f^*}\|y\|^{t'}  \quad  \text{whenever} \quad \|y\|\ge r',
\end{equation}
but with possibly different constants.  
\begin{lemma}\label{equivgrowthcond}
Let $f:\mathbb{R}^n\to \mathbb{R}$ be a convex function and let $f^*:\mathbb{R}^n\to (-\infty,\infty]$ be its Fenchel dual.
Then $f$ satisfies \eqref{growthcondf1} if and only if $f^*$ satisfies \eqref{growthcondf*}.
\end{lemma}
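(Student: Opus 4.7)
The plan is to prove the forward direction ``$f$ satisfies \eqref{growthcondf1} $\Rightarrow$ $f^*$ satisfies \eqref{growthcondf*}'' by direct calculus on the defining supremum, and then obtain the reverse direction for free via the biconjugacy relation $f^{**}=f$ (valid because any finite-valued convex function on $\mathbb{R}^n$ is continuous, hence a lower semicontinuous proper convex function). The key observation is that the exponents transform by Legendre/H\"older conjugation: I expect the conclusion to be \eqref{growthcondf*} with $s'=t/(t-1)$ and $t'=s/(s-1)$. Since $1<s\le t$, these satisfy $1<s'\le t'$, matching the form of the hypothesis.

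For the upper bound on $f^*$, I would split the supremum in $f^*(y)=\sup_{x\in\mathbb{R}^n}(y^\mathsf{T}x-f(x))$ into the regions $\|x\|\le r$ and $\|x\|\ge r$. On the compact region $\|x\|\le r$, continuity of $f$ gives $|f(x)|\le M$ for some constant $M$, so $y^\mathsf{T}x-f(x)\le r\|y\|+M$. On $\|x\|\ge r$, I use the lower bound $f(x)\ge k_f\|x\|^s$ from \eqref{growthcondf1} together with Cauchy--Schwarz to write $y^\mathsf{T}x-f(x)\le \|y\|\|x\|-k_f\|x\|^s$; maximizing this single-variable expression over $\|x\|\ge 0$ via elementary calculus yields the maximum value $C_1\|y\|^{s/(s-1)}$ with $C_1=(s-1)(sk_f)^{-1/(s-1)}s^{-1}$. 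For $\|y\|$ sufficiently large, this polynomial term dominates $r\|y\|+M$, giving $f^*(y)\le K_{f^*}\|y\|^{s/(s-1)}$ as the upper bound in \eqref{growthcondf*}.

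For the lower bound on $f^*$, I would restrict the supremum to the one-parameter family $x=\lambda y/\|y\|$ for $\lambda\ge r$. Using the upper bound $f(x)\le K_f\|x\|^t$ from \eqref{growthcondf1}, I get
\[
f^*(y)\ge \sup_{\lambda\ge r}\bigl(\lambda\|y\|-K_f\lambda^t\bigr).
\]
Again this is elementary calculus in $\lambda$: the unconstrained maximum is attained at $\lambda^\star=(\|y\|/(tK_f))^{1/(t-1)}$ with value $C_2\|y\|^{t/(t-1)}$; this $\lambda^\star$ exceeds $r$ for $\|y\|$ large enough, yielding $f^*(y)\ge k_{f^*}\|y\|^{t/(t-1)}$, the lower bound in \eqref{growthcondf*}.

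The reverse direction then follows by applying what we just proved to the convex function $f^*$ (using that $f^*$ is finite-valued on $\mathbb{R}^n$, as the just-proven upper bound shows): \eqref{growthcondf*} for $f^*$ implies that $f^{**}$ satisfies a bound of the form \eqref{growthcondf1} with exponents $t'/(t'-1)=s$ and $s'/(s'-1)=t$; since $f^{**}=f$, this is exactly \eqref{growthcondf1}. The main obstacle I foresee is purely bookkeeping: correctly tracking the Legendre-conjugate exponents, verifying $1<s/(s-1)$ (which requires $s>1$, explaining the hypothesis $s>1$ in \eqref{growthcondf1}), and handling the ``small $\|x\|$'' region where the growth bound fails by replacing it with a continuity-based compact bound. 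No deeper convex-analytic input is needed beyond the Fenchel--Moreau theorem.
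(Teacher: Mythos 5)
Your proposal is correct and follows essentially the same route as the paper: bound $y^\mathsf{T}x-f(x)$ by $\|y\|\|x\|-k_f\|x\|^s$ for $\|x\|\ge r$ and by a linear function of $\|y\|$ on the compact region to get the upper bound with exponent $s/(s-1)$, and evaluate at $x=\lambda^\star y/\|y\|$ with $\lambda^\star=(\|y\|/(tK_f))^{1/(t-1)}$ for the lower bound with exponent $t/(t-1)$; the constants you obtain match the paper's. The only difference is cosmetic: you make the reverse implication explicit via Fenchel--Moreau and the conjugate-exponent bookkeeping, which the paper leaves implicit (invoking $f^{**}=f$ only in the subsequent corollary).
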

\begin{proof}
For $\|x\|\ge r$, the lower bound in  \eqref{growthcondf1} and $y^\mathsf{T}x\le \|y\|\|x\|$ give
\begin{equation}\label{basineq}
y^\mathsf{T}x - f(x)\le \|y\|\|x\|-k_f\|x\|^s=\|x\|(\|y\| - k_f\|x\|^{s-1}).
\end{equation}
Observe that for $z \in [0, \infty)$, the maximum of $h(z)\coloneqq \|y\| z- k_f z^s$ is attained at
\[
z^\star=\left(\frac{\|y\|}{k_f s}\right)^{1/(s-1)},
\]
with maximum value
\[
h(z^\star)=\frac{s-1}{s}\|y\|z^\star=\frac{s-1}{s (k_f s)^{1/(s-1)}} \|y\|^{s/(s-1)}.
\]

Let $\mu\coloneqq \min_{\lVert x \rVert \le r} f(x)$.  Then
\[
\max_{\lVert x \rVert \le r} \blue{\bigl( y^\mathsf{T} x -f (x)\bigr)}\le \|y\| r-\mu.
\]
Combine this with \eqref{basineq} and we obtain
\[
f^*(y)\le \max\left(\|y\| r-\mu, \frac{s-1}{s (k_f s)^{1/(s-1)}} \|y\|^{s/(s-1)}\right).
\]
This last inequality yields the upper bound in \eqref{growthcondf*}
with 
\[
K_{f^*}= \frac{s-1}{s (k_f s)^{1/(s-1)}},\qquad t'=\frac{s}{s-1},\qquad r'\ge r_1,
\]
for a corresponding $r_1$  that depends on $k_f,s,r,\mu$.  More precisely, either $r_1=0$ or $r_1$ is the unique positive solution of
\[
r_1r-\mu=\frac{s-1}{s (k_f s)^{1/(s-1)}} r_1^{s/(s-1)}.
\]

To deduce the lower bound in \eqref{growthcondf*}, let $y$ be such that
\[
\|y\| \ge r^{t-1} K_f t.
\]
Choose $x = cy$ such that
\[
\|x\|= \left( \frac{\|y\|}{K_f t} \right)^{1/(t-1)}.
\]
It follows that $\| x \| \ge r $ and so  the upper bound in \eqref{growthcondf1} yields $f^*(y)\ge \|y\|\|x\|-K_f\|x\|^t $.
Hence we have the lower bound in \eqref{growthcondf*} with
\[
k_{f^*} = \frac{t-1}{t (K_f t)^{1/(t-1)}}, \qquad s'=\frac{t}{t-1}. 
\]
\end{proof}

\begin{theorem}\label{compcpmplexf}
Let  $f:\mathbb{R}^n\to  \mathbb{R}$ be a convex function satisfying  
\eqref{growthcondf1}.  Then the approximation problem for $f^*$ is polynomial-time reducible to the approximation problem for $f$.
\end{theorem}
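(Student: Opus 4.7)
The plan is to reduce the computation of $f^*(y) = \sup_{x \in \mathbb{R}^n}(y^\mathsf{T} x - f(x))$ to a convex minimization over a Euclidean ball and then invoke Lemma~\ref{mincomput}.

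First, given $y \in \mathbb{Q}^n$, I would use the lower growth bound $f(x) \ge k_f \|x\|^s$ (valid for $\|x\| \ge r$) together with $s > 1$ to determine, in polynomial time, a rational $R = R(y)$ for which $\|y\|\|x\| - k_f\|x\|^s < -f(0)$ whenever $\|x\| \ge R$. Since the value $y^\mathsf{T} 0 - f(0) = -f(0)$ is attained at the origin, this forces the supremum defining $f^*(y)$ to be achieved strictly inside $B(0, R)$. The quantity $|f(0)|$ is bounded using a single oracle call of precision $1$, so $R$ may be chosen of order $\|y\|^{1/(s-1)}$ and has bit-length polynomial in $\langle y \rangle, \langle k_f \rangle, \langle s \rangle, \langle r \rangle$.

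Next, I would enlarge the domain slightly to $C \coloneqq B(0, R + \delta)$ for a small rational $\delta > 0$, so that the minimum of $h(x) \coloneqq f(x) - y^\mathsf{T} x$ is attained on $B(0, R) = S(C, -\delta)$; this verifies hypothesis \eqref{mindeltacond}. The upper growth bound $f(x) \le K_f \|x\|^t$ combined with convexity (the maximum of a convex function on a Euclidean ball is attained on the boundary) gives an explicit rational bound $|h(x)| \le \alpha$ on $C$ of polynomial bit-length, once a lower bound on $f$ is obtained from a subgradient estimate at the origin produced by a few oracle calls. Since $y^\mathsf{T} x$ is computed exactly for rational $x$, any $\varepsilon$-approximation of $f$ provides an $\varepsilon$-approximation of $h$, so the approximation problem for $h$ on $C$ is polynomial-time reducible to the approximation problem for $f$. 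Since \textsc{mem} in $C$ is trivial and the centering assumption holds with $a = 0$, Lemma~\ref{mincomput} applies to $h$ on $C$ and yields an $\varepsilon$-approximation of $\min_{x \in C} h(x) = -f^*(y)$, and hence of $f^*(y)$.

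The main obstacle will be the bookkeeping required to verify that all parameters---the radius $R$, the margin $\delta$, the uniform bound $\alpha$, and the precision passed to the inner oracle calls---have bit-lengths polynomial in the overall input size $\langle y \rangle + \langle \varepsilon \rangle + \langle k_f \rangle + \langle K_f \rangle + \langle s \rangle + \langle t \rangle + \langle r \rangle$, together with producing an explicit lower bound on $f$ over $C$ from only oracle access. The scales $R \sim \|y\|^{1/(s-1)}$ and $\alpha \sim K_f (R+\delta)^t$ both yield polynomial encodings, so once the subgradient-based lower bound on $f$ is secured, the composition of reductions (approximation of $f \Rightarrow$ approximation of $h \Rightarrow$ approximation of $\min_C h \Rightarrow$ approximation of $f^*(y)$) goes through in polynomial time.
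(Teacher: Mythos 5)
Your proposal is correct and follows essentially the same route as the paper: use the lower growth bound $k_f\|x\|^s$ with $s>1$ to confine the supremum defining $f^*(y)$ to a ball of radius roughly $\|y\|^{1/(s-1)}$, enlarge that ball slightly so that hypothesis \eqref{mindeltacond} holds for $h(x)=f(x)-y^\mathsf{T}x$, and then invoke Lemma~\ref{mincomput} (the paper enlarges by $1$ where you enlarge by $\delta$, and treats $y=0$ as a separate case, but these are cosmetic differences). Your explicit attention to obtaining a computable bound $\alpha$ on $|h|$ over $C$ and to the bit-lengths of $R$, $\delta$, $\alpha$ is a point the paper largely leaves implicit, and is a welcome addition rather than a divergence.
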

\begin{proof}
We will compute an approximation of $f^*(y)$ with oracle calls to approximations of $f(x)$.

Suppose first that $y=0$ and we need to compute an approximation of $f^*(0)=\red{\sup}_{x\in\mathbb{R}^n} -f(x)$.  By the lower bound in \eqref{growthcondf1}, there is some $\rho_0=\rho(r, k_f,s)\in \mathbb{Q}\cap(0,\infty)$ such that $-f(x)<-f(0)$ whenever $\|x\|\ge \rho_0$.   Hence 
\[
f^*(0)=\max_{\|x\|\le \rho_0}-f(x)=-\min_{\|x\|\le \red{\rho_0}} f(x)=\blue{-}\min_{\|x\|\le \rho_0+1} f(x).
\]
Let $C=B(0,\rho_0+1)$. Since \textsc{mem} in a Euclidean ball $B(0,\rho)$ is clearly polynomial-time decidable, the conditions of Lemma~\ref{mincomput} are satisfied.  Hence \textsc{approx} for $f^*(0)$ is polynomial-time reducible to \textsc{approx} for $f$.

Suppose now that $y\ne 0$.  Clearly $f^*(y)\ge -f(0)$.  Let $\rho>r$, where $r$ is as in \eqref{growthcondf1}.  Let $f^*_\rho(y)\coloneqq \max_{\|x\|=\rho} \blue{\bigl( y^\mathsf{T}x -f(x)\bigr)}$.
As $y^\mathsf{T}x\le \|y\| \|x\|$, the lower bound in \eqref{growthcondf1} gives
\[
f^*_{\rho}(y)\le \|x\|(\|y\|-\blue{k_f}\|x\|^{s-1})= \rho(\|y\|-\blue{k_f}\rho^{s-1}).
\]
Hence there exists $\rho_1  = \rho(\|y\|,k_f,s)\in \mathbb{Q}\cap (r,\infty)$ such that \red{$-f(0)> f^*_{\rho}(y)$ for all $y \in \mathbb{R}^n$  whenever $\rho\geq \rho_1$}.  Therefore
\[f^*(y)=-\min_{\|x\|\le \rho_1}\blue{\bigl( f(x)-y^\mathsf{T}x\bigr)}=-\min_{\|x\|\le \rho_1+1} \blue{\bigl(f(x)-y^\mathsf{T}x\bigr)}.
\]
Let $0 \ne y\in \mathbb{Q}^n$ and $C=B(0,\rho_1+1)$.  Then the conditions of Lemma~\ref{mincomput} are satisfied.  Hence \textsc{approx} for $f^*(y)$ is polynomial-time reducible to \textsc{approx} for $f$.
\end{proof}

Since $f^{**} = f$ for a convex function and by Lemma~\ref{growthcondf1}, $f$ and $f^*$ both satisfy the polynomial growth condition if either one does, we obtain the following.
\begin{corollary}
Let  $f:\mathbb{R}^n\to  \mathbb{R}$ be a convex function satisfying  
\eqref{growthcondf1}.  The approximation problem for $f^*$ is polynomial-time computable (resp.\ NP-hard) if and only if the approximation problem for $f$ is polynomial-time computable (resp.\ NP-hard).
\end{corollary}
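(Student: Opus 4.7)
The plan is to apply Theorem~\ref{compcpmplexf} twice, once in each direction, using the involutive nature of the Fenchel transform. The forward direction---that \textsc{approx} for $f^*$ is polynomial-time reducible to \textsc{approx} for $f$---is exactly the content of Theorem~\ref{compcpmplexf}. This immediately yields two of the four implications: polynomial-time computability of $f$ passes to $f^*$, and NP-hardness of $f^*$ passes back to $f$.

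For the reverse direction, I would first invoke Lemma~\ref{equivgrowthcond} to conclude that $f^*$ itself satisfies a polynomial growth condition of the form \eqref{growthcondf1}, with constants $k_{f^*}, K_{f^*}, s', t', r'$ derived from $k_f, K_f, s, t, r$ as in the proof of that lemma. In particular, I would check that the new exponents satisfy $1 < s' \le t'$; this follows from $1 < s \le t$ together with the explicit formulas $s' = t/(t-1)$ and $t' = s/(s-1)$, so $f^*$ meets the hypotheses required to reapply Theorem~\ref{compcpmplexf}. Since $f$ is finite-valued and convex on all of $\mathbb{R}^n$, it is automatically continuous, hence lower semicontinuous and proper, so the Fenchel--Moreau biduality $f^{**} = f$ holds. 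Applying Theorem~\ref{compcpmplexf} with $f^*$ in place of $f$ then gives a polynomial-time reduction from \textsc{approx} for $f^{**} = f$ to \textsc{approx} for $f^*$.

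Chaining the two reductions yields polynomial-time inter-reducibility of \textsc{approx} for $f$ and \textsc{approx} for $f^*$, which delivers both halves of the biconditional: if either is polynomial-time decidable then so is the other, and if either is NP-hard then so is the other. I do not anticipate any real obstacle; the argument is a routine assembly of Theorem~\ref{compcpmplexf}, Lemma~\ref{equivgrowthcond}, and the Fenchel--Moreau theorem. The only point requiring attention is the bookkeeping of growth constants, ensuring that Lemma~\ref{equivgrowthcond} really produces a condition of the same shape as \eqref{growthcondf1} rather than something weaker, so that Theorem~\ref{compcpmplexf} is genuinely applicable on the $f^*$ side.
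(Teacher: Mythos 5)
Your proposal is correct and is essentially the paper's own argument: the paper likewise obtains the corollary by applying Theorem~\ref{compcpmplexf} in both directions, using Lemma~\ref{equivgrowthcond} to transfer the growth condition \eqref{growthcondf1} to $f^*$ and the identity $f^{**}=f$ for (continuous, hence lower semicontinuous proper) convex $f$ to close the loop. Your extra check that $1<s'\le t'$ and that $f^*$ is finite-valued is a welcome bit of care the paper leaves implicit.
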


\section{\red{Conclusion}}

In this article, we have focused on establishing equivalence in the computational complexity of dual objects for several common convex objects and common notions of duality. These results are expected to have immediate  applications in many areas. We conclude our article with two such examples.

Drawing from our own work, we rely on the results in Sections~\ref{sec:norm} and \ref{sec:approx} to deduce that the nuclear norm for higher-order tensors is NP-hard to compute \cite[Corollary~8.8]{FL} and likewise for the dual norm of an operator $(p,q)$-norm when $1 \le q < p \le \infty$ or when $p=q \notin \{ 1,2,\infty\}$ \cite[Section~7]{FL}.

Following the notations in \cite{JLZ}, we let $\smash{\Sigma^2_{\nabla^2_{n,4}}}$\!\!\!\! denote the cone of \textsc{sos}-convex quartic forms \cite{HN} and  $\Sigma^2_{n,4} \cap \mathbb{S}^{n^4}_{\operatorname{cvx}}$ denote the cone of convex quartic forms that are \textsc{sos}. Using the results in Section~\ref{sec:cones} and \cite[Proposition~5.1 and Theorem~5.4]{JLZ}, we easily deduce that membership in the dual cone of $\smash{\Sigma^2_{\nabla^2_{n,4}}}$\!\!\!\! is polynomial-time whereas membership in the dual cone of $\Sigma^2_{n,4} \cap \mathbb{S}^{n^4}_{\operatorname{cvx}}$ is NP-hard --- observations that are new to the best of our knowledge. Furthermore, if we assume that $\mathit{P} \ne \mathit{NP}$, then it follows that the containment of  $\smash{\Sigma^2_{\nabla^2_{n,4}}}$\!\!\!\! in  $\Sigma^2_{n,4} \cap \mathbb{S}^{n^4}_{\operatorname{cvx}}$ is strict, verifying \cite[Theorem~4.1]{JLZ}.

\section*{Acknowledgment}

We are very grateful to the two anonymous referees for their exceptionally helpful suggestions and comments.
We would like to thank Lev~Reyzin for telling us about the various variants of the membership problem, and to Shuzhong Zhang for informing us that the problem of complexity of dual cones is still \blue{open} and pointing us to \cite{DG14, JLZ}.

\bibliographystyle{plain}

\end{document}